\pgfplotsset{compat=1.14}
\newtheorem{thm}{Theorem}[section]
\newtheorem{lem}[thm]{Lemma}
\newtheorem{prop}[thm]{Proposition}
\newtheorem{claim}[thm]{Claim}
\theoremstyle{plain}
\DeclareMathOperator{\var}{Var}
\newcommand{\osc}{\mathrm{osc}}
\newcommand{\EE}{\mathbb{E}}
\newcommand{\PP}{\mathbb{P}}
\newcommand{\eps}{\varepsilon}
\newtheorem*{assumption*}{\assumptionnumber}
\providecommand{\assumptionnumber}{}
\newtheorem*{disorder*}{\assumptionnumber}
\providecommand{\assumptionnumber}{}
\title[The edge-averaging process on graphs with random initial opinions]{The edge-averaging process on graphs \\ with random initial opinions}
\author{Dor Elboim}
\address{Dor Elboim\hfill\break
    Department of mathematics, Stanford University, California, USA.}
\email{dorelboim@gmail.com}
\author{Yuval Peres}
\address{Yuval Peres\hfill\break
    Beijing Institute of Mathematical Sciences and Applications (BIMSA), Huairou district, Beijing, China.}
\email{yperes@gmail.com}
\author{Ron Peretz}
\address{Ron Peretz\hfill\break Economics Department, Bar Ilan University, Israel.}
\email{ron.peretz@biu.ac.il}
\date{\today}
\begin{document}

\begin{abstract}
In several settings (e.g., sensor networks and social networks), nodes of a graph are equipped with initial opinions, and the goal is to estimate the average of these opinions using local operations. A natural algorithm to achieve this is the {\em edge-averaging process}, where edges are repeatedly selected at random (according to independent Poisson clocks) and the opinions on the nodes of each selected edge are replaced by their average.  
The effectiveness of this algorithm is determined by its convergence rate. It is known that on a finite graph of $n$ nodes, the opinions  reach approximate consensus in polynomial time.   
We prove that the convergence is much faster when the initial opinions are disordered (independent identically distributed):
the time to reach approximate consensus is $O (\log^2n)$, and this bound is sharp. For infinite graphs, we show that for every $p\geq 1$, if the initial opinions are in $L^p$, then the opinion at each vertex converges to the mean in $L^p$, and if $p>4$, then almost sure convergence holds as well.
\end{abstract}

\maketitle

\begin{figure}[h]
     \centering
    \includegraphics[width=0.23\linewidth]{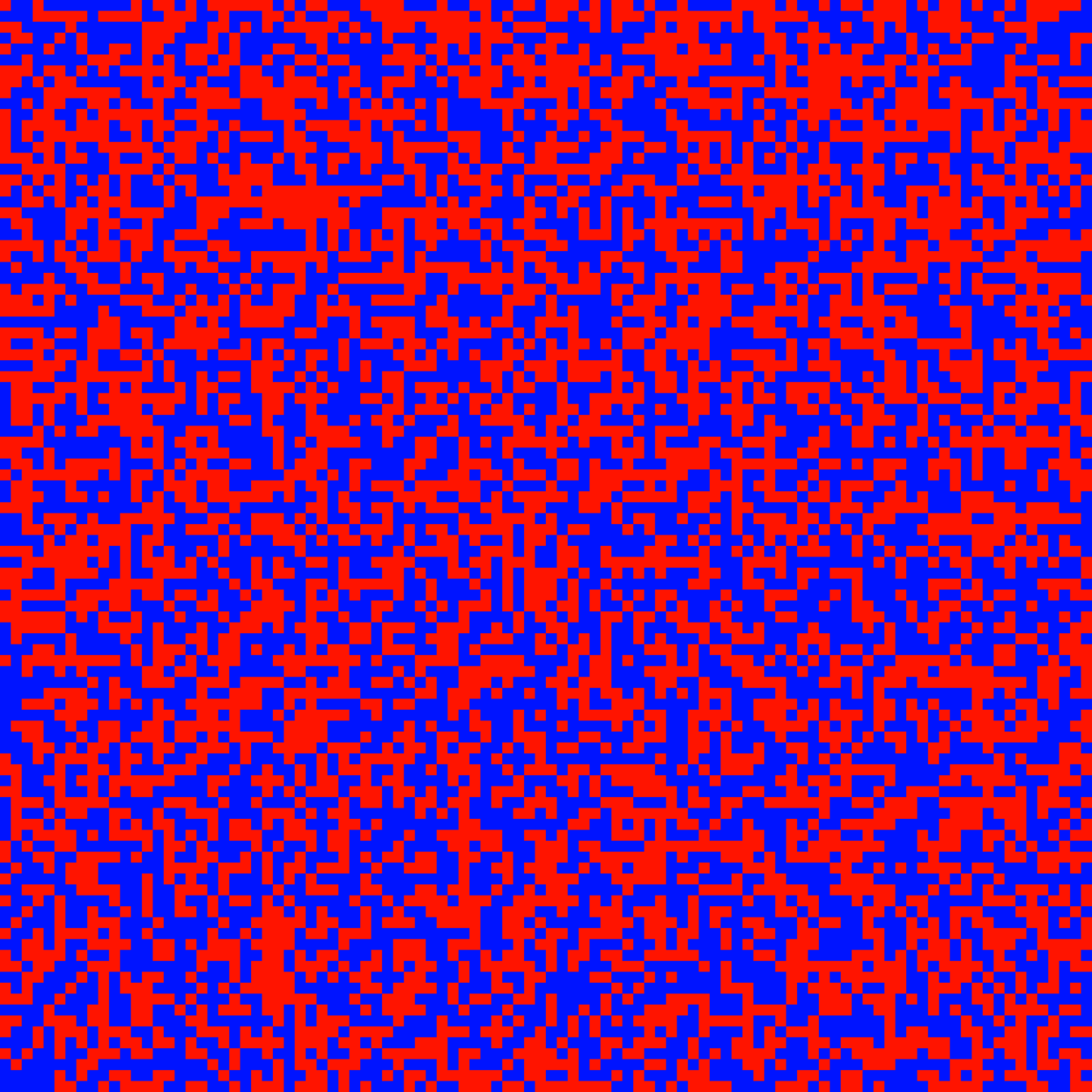}
\includegraphics[width=0.23\linewidth]{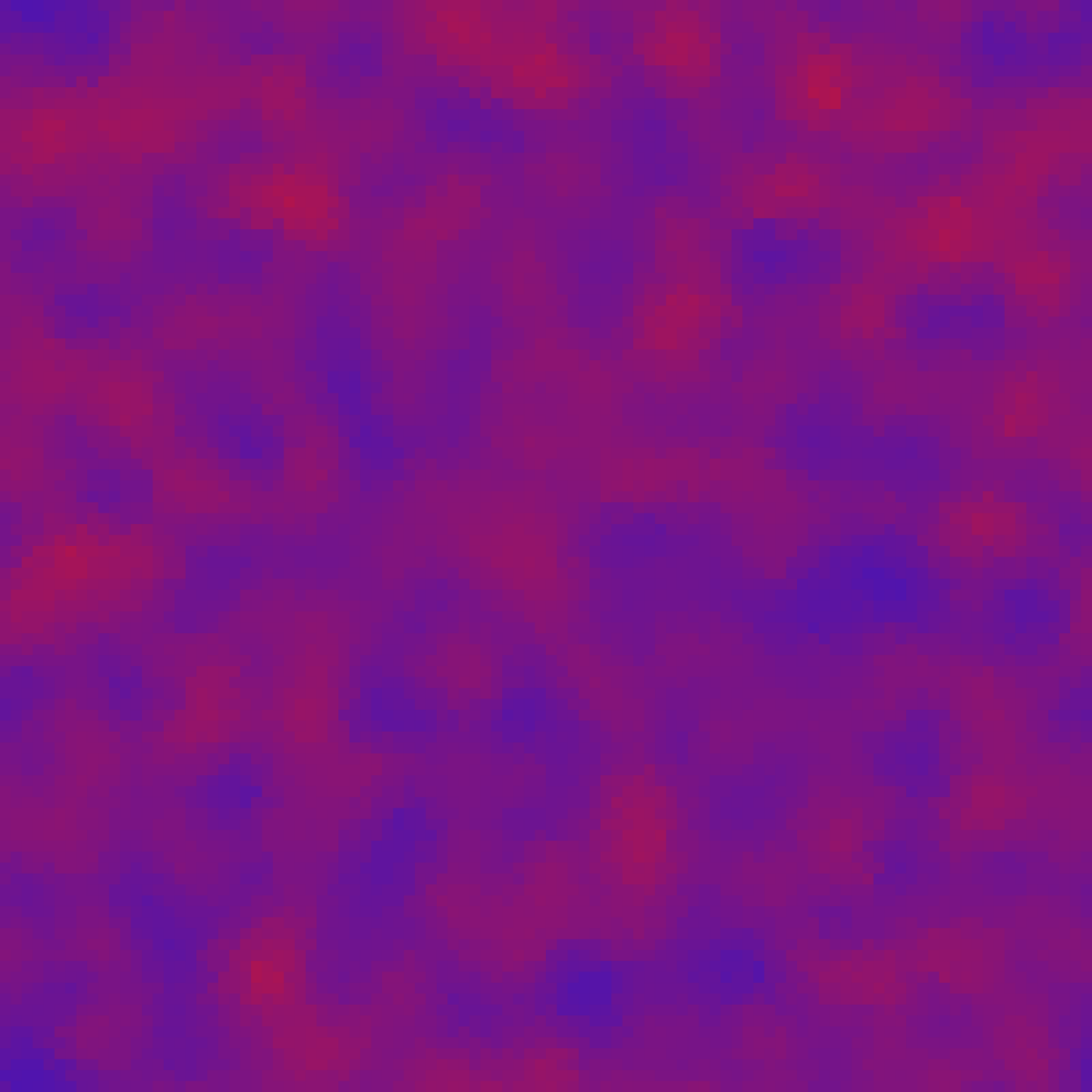}
    \includegraphics[width=0.23\linewidth]{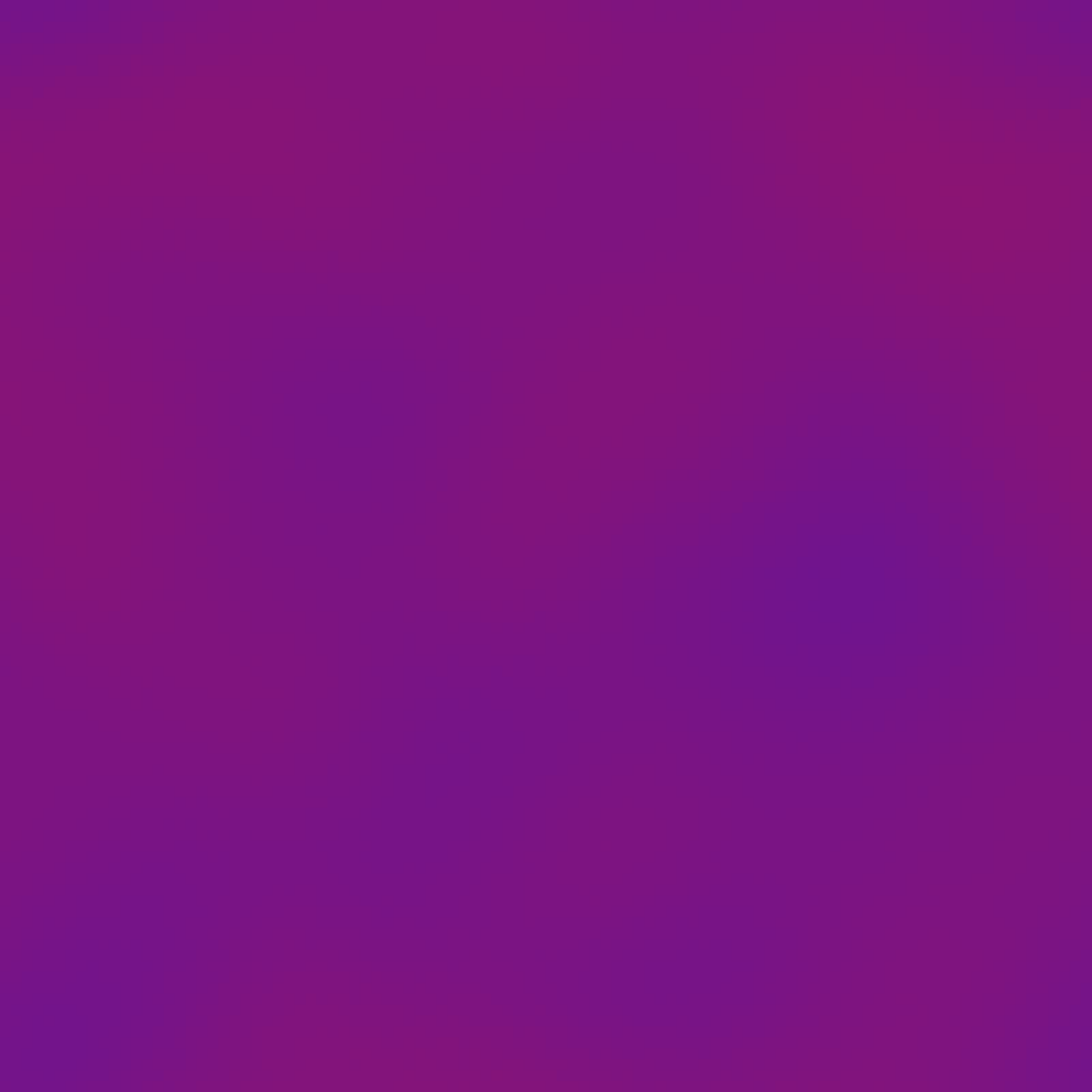}
    \includegraphics[width=0.23\linewidth]{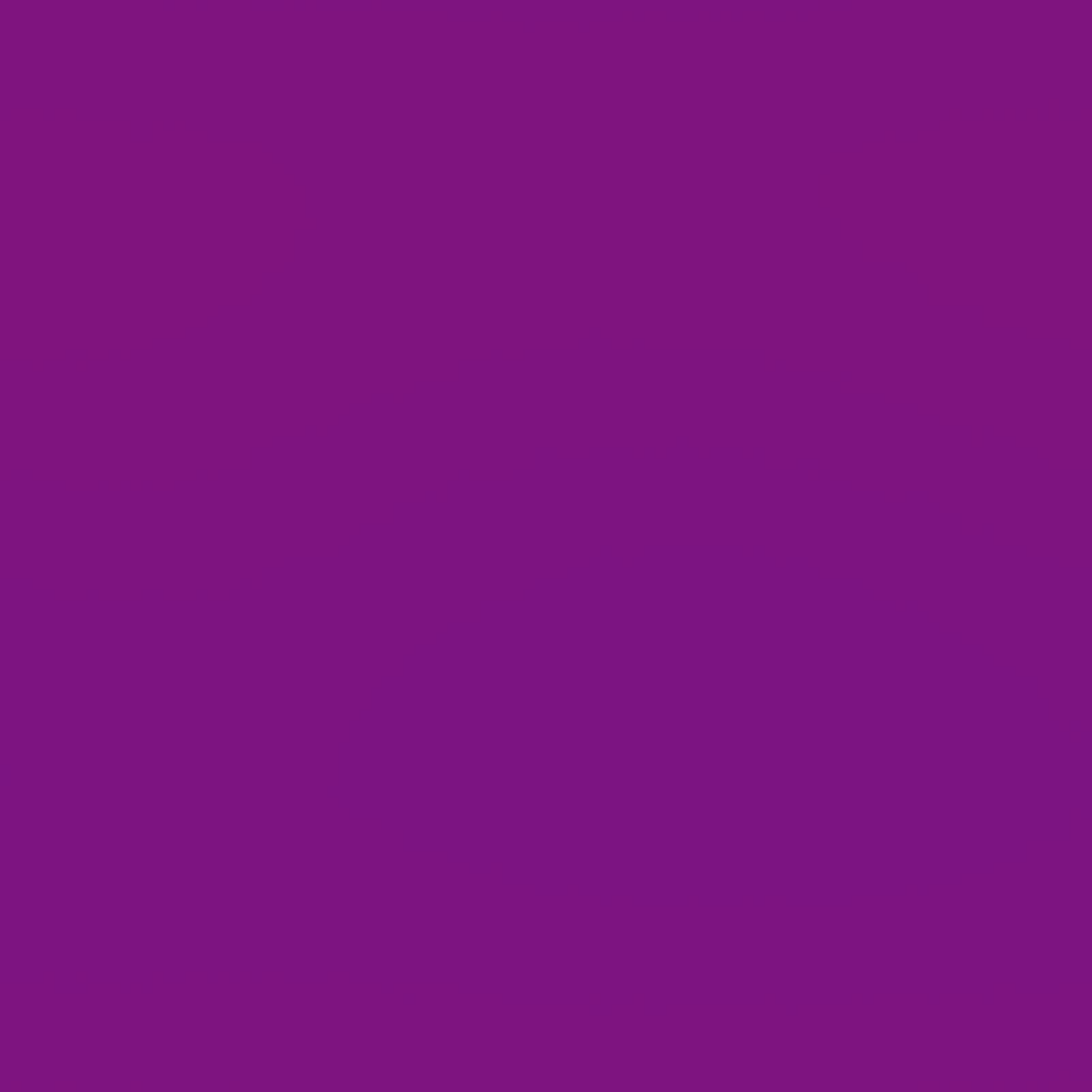}\\
    \vspace{0.2cm}
 \includegraphics[width=0.23\linewidth]{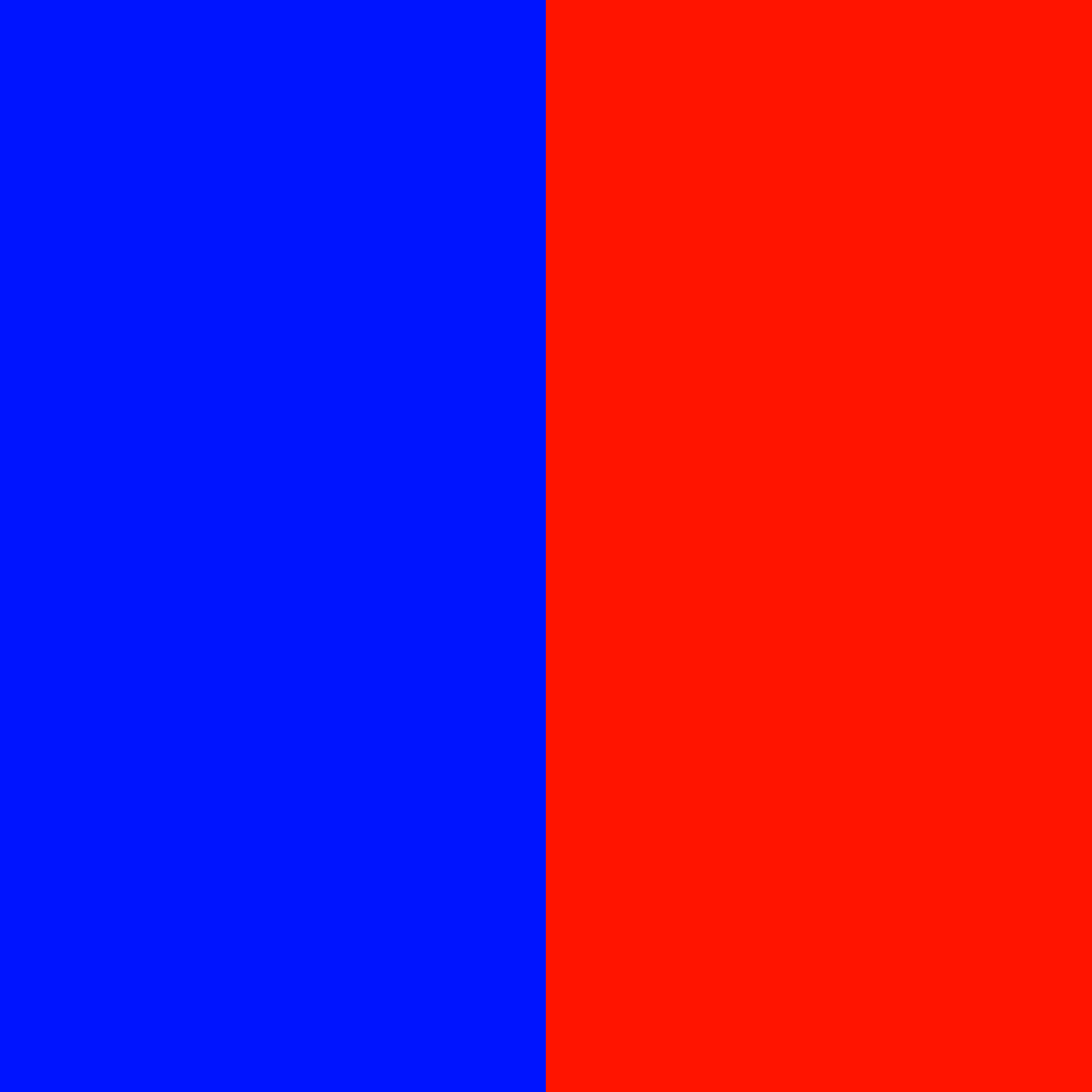}
    \includegraphics[width=0.23\linewidth]{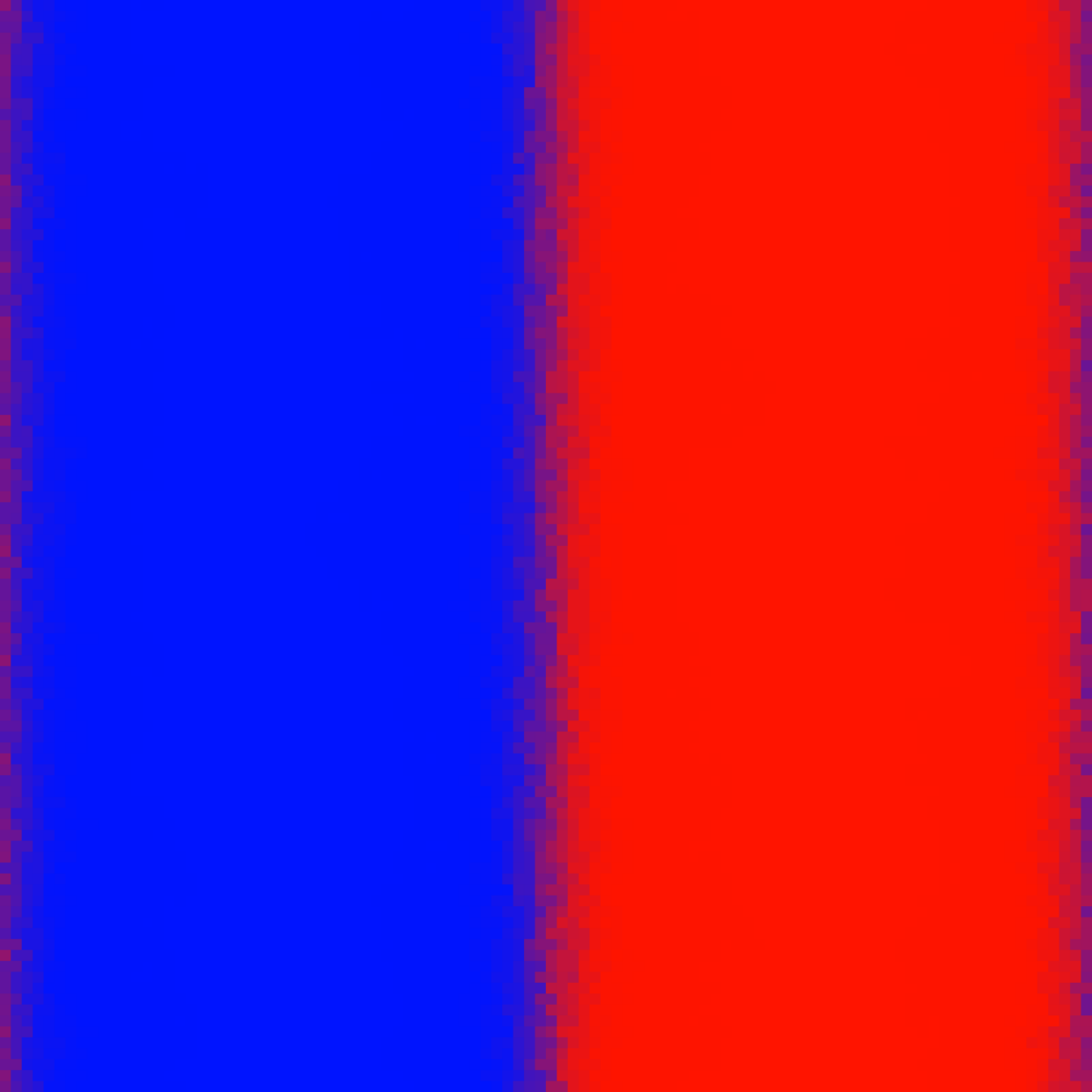}
    \includegraphics[width=0.23\linewidth]{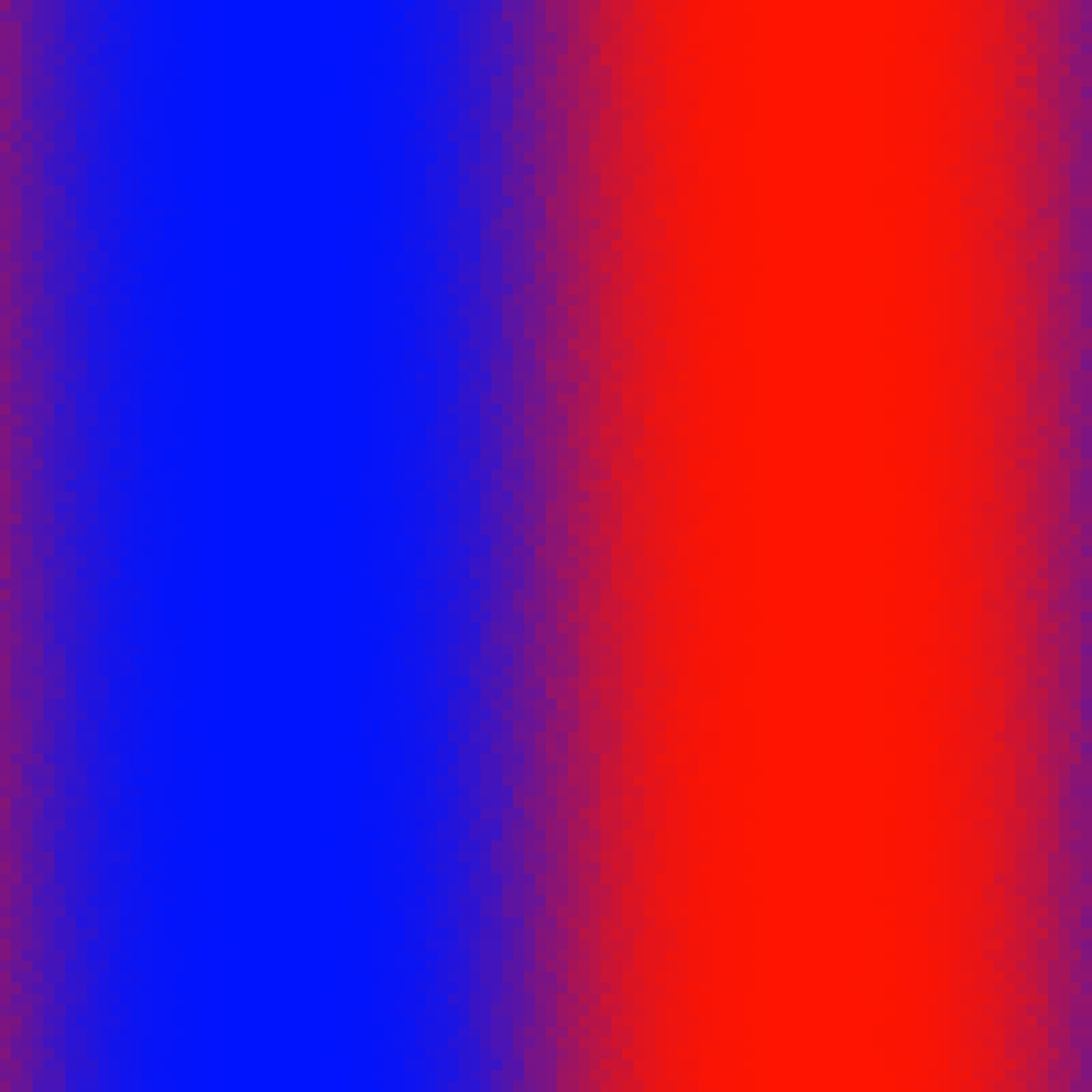}
    \includegraphics[width=0.23\linewidth]{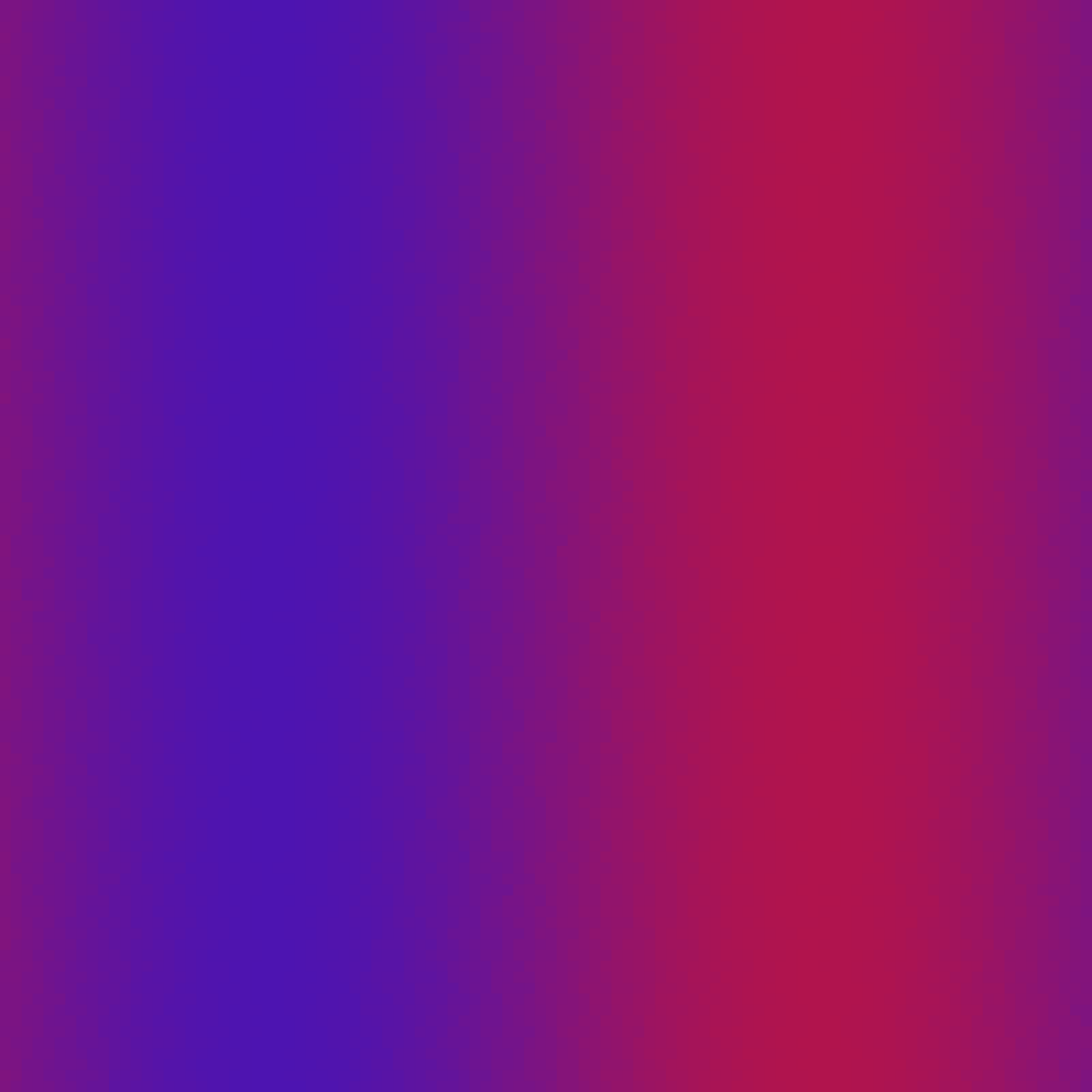}
    \caption{Simulation of the edge-averaging process on the $100\times 100$ discrete torus with uniform random (top) and deterministic (bottom) binary initial opinions. Snapshots taken at times (left to right) $0$, $7$, $60$, and $600$.}
    \label{fig:simulation}
\end{figure}

\newpage

\section{Introduction}

In a variety of networks, information at individual nodes (e.g., opinions in social networks \cite{golub2010naive} and measurements in sensor networks \cite{boyd2006randomized}) must be shared via local connections to reach a joint estimate. This situation is known in economics as distilling the ``wisdom of the crowd''.   Several iterated averaging processes have been used for this purpose; here we focus on the {\em edge-averaging process}; see the survey \cite{MR2908618}. Every node is equipped with an initial opinion (a real number). In each step, two adjacent nodes are selected at random, and their opinions are both replaced by their average. 

Since the maximal and minimal opinions are monotonic functions of time and the overall average opinion is an invariant of the system, it is not hard to show that all the opinions converge to that number. This desirable feature is often referred to as ``social learning;'' see  the survey by Golub and Sadler~\cite{Sadler2016-SADLIS}.

A natural question is how fast the opinions get close to the average. 
In their seminal paper, Boyd, Ghosh, Prabhakar, and Shah \cite{boyd2006randomized} addressed that question. 
They provided  spectral upper bounds for the number of updates required to approach consensus which depend only on the network topology, i.e., they hold  for all initial opinion profiles. These upper bounds are often a super-linear power law in the number of edges $m$, e.g., for a cycle of $m$ nodes they are of  order $m^3$. Nevertheless, for the worst case initial profiles the bounds of \cite{boyd2006randomized} are sharp.

\bigskip

It turns out that for typical initial opinion profiles, these worst-case upper bounds are too pessimistic. Figure~\ref{fig:simulation} indicates that convergence to consensus is much faster when the initial opinions are disordered.
We focus on initial 
opinion profiles that are independent and identically distributed (i.i.d.).  There are several motivations for this. First, taking these initial profiles to be i.i.d.\ uniform in $[0,1]$, high probability results correspond to  most (in terms of volume) initial profiles in $[0,1]^V$, where $V$ is the set of nodes.
Second, in sensor networks, the measurements at different nodes  often arise from independent noise added to a common parameter. 
Finally, the i.i.d.\ assumption leads to tractable problems, yet we believe that the results will apply to a wider class of initial opinion profiles satisfying some correlation decay hypothesis; see \ref{section:future}.

\subsection{The model} 
A network is modeled as a  connected graph $G=(V,E)$ which is finite, or infinite of bounded degree\footnote{The assumption of bounded degree can be relaxed to {\em stochastic completeness}, see Section \ref{section:existence}.}. Each vertex $v\in V$ has an opinion $f_t(v)\in \mathbb R$ at every time $t\ge 0$, and the opinions evolve over time as follows. Each edge is equipped with an independent unit-rate Poisson clock. When the clock of an edge $uv\in E$ rings at time $t$, both $u$ and $v$ update  their opinion to be the average of the two. That is,
\begin{equation}
\label{equation definition}    
f_t(u)=f_t(v)=\frac{f_{t-}(u)+f_{t-}(v)}{2},
\end{equation}
where $f_{t-}(u):=\lim _{s\uparrow t} f_s(u)$.
In this process, the number of updates per edge up to time $t$ has mean $t$ and is concentrated near $t$ for $t$ large. 
\subsection{Global convergence}
 For $f\colon V\to \mathbb R$, define its {\em oscillation} by 
 $$\osc(f):=\sup_{u,v\in V}|f(u)-f(v)|\,.$$ 
Typically, the opinions at different nodes never agree exactly, and there are several ways to define approximate consensus. Here, as in \cite{dolev1986reaching} and \cite{elboim2024asynchronous}, we require that all opinion differences be uniformly small. Formally,   given $\eps>0$, define the $\eps$-{\em consensus time} by $$\tau_{\eps}:=\min\{t\geq 0: \osc(f_t)\leq \eps\}\,.$$ 
 Figure \ref{fig:charts} depicts the function  $t\mapsto\osc(f_t)$ for the two simulations shown in Figure \ref{fig:simulation}.
 
 Our first theorem gives a polylogarithmic upper bound for the $\eps$-consensus time.
\begin{thm}\label{theorem finite}
    Consider the edge-averaging process on a graph $G=(V,E)$ with  $|V|=n$. Suppose that the initial opinions $\{f_0(v)\}_{v\in V}$ are i.i.d.\ random variables that satisfy $|f_0(v)|\leq 1$ a.s.  Then for every $\eps >0$, we have $$\EE[\tau_\eps]\le C\log^2(n)/\eps^4,$$ where $C>0$ is a universal constant. 
\end{thm}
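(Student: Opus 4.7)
The plan is to decouple the randomness of the initial opinions from that of the Poisson clocks and reduce the problem to a concentration inequality combined with a universal $L^2$ estimate for the averaging kernel. Write $f_t(v) = \sum_{u \in V} K_t(v,u) f_0(u)$, where $K_t$ is the random doubly-stochastic matrix obtained as the product of the edge-averaging matrices for all clocks that have rung by time $t$. Crucially, $K_t$ depends only on the Poisson process and is therefore independent of $f_0$. Writing $\mu := \EE[f_0(v)]$ and using that each row and each column of $K_t$ sums to $1$, for any $u,v\in V$ we can express
\begin{equation*}
f_t(v) - f_t(u) = \sum_{w \in V} \bigl(K_t(v,w) - K_t(u,w)\bigr)\bigl(f_0(w) - \mu\bigr),
\end{equation*}
which, conditional on $K_t$, is a weighted sum of bounded independent mean-zero random variables. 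Hoeffding's inequality then yields
\begin{equation*}
\PP\bigl(|f_t(v) - f_t(u)| > \eps \,\big|\, K_t\bigr) \;\le\; 2\exp\!\left(-\frac{\eps^2}{8\,\|K_t(v,\cdot) - K_t(u,\cdot)\|_2^2}\right),
\end{equation*}
and a union bound over pairs reduces the theorem to showing that $\Delta_t := \max_{u,v}\|K_t(v,\cdot) - K_t(u,\cdot)\|_2^2$ is at most $c\eps^2/\log n$ with sufficiently high probability at the target time $t = C\log^2(n)/\eps^4$.

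The technical heart of the argument would be a universal second-moment estimate for the averaging kernel, namely, for every vertex $v$ and all $t\ge 1$,
\begin{equation*}
\EE\bigl[\|K_t(v,\cdot)\|_2^2\bigr] \;\le\; \frac{C}{\sqrt{t}},
\end{equation*}
valid on every connected graph of bounded degree. I would prove this via duality with the pair process $(Z^1_t,Z^2_t)$ of two random walks on $G$ that share the Poisson environment but use independent fair coins at each ringing: when an edge $ab$ rings, each walker located at $a$ or $b$ independently moves to the other endpoint with probability $1/2$. A direct calculation identifies $\EE[\|K_t(v,\cdot)\|_2^2]$ with the coincidence probability $\PP(Z^1_t = Z^2_t \mid Z^1_0 = Z^2_0 = v)$. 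To bound this by $C/\sqrt{t}$, I would argue that, after a brief burn-in, the graph distance $d_G(Z^1_t,Z^2_t)$ stochastically dominates a reflected one-dimensional random walk at the origin, whose return probability decays like $1/\sqrt{t}$. The bound on $\Delta_t$ then follows from a triangle inequality with $\pi := (1/n)\mathbf{1}$ and $\|K_t(v,\cdot) - K_t(u,\cdot)\|_2 \le \|K_t(v,\cdot) - \pi\|_2 + \|K_t(u,\cdot) - \pi\|_2$.

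Combining the two ingredients: at time $t = C_1\log^2(n)/\eps^4$, the kernel estimate (promoted to a high-probability tail bound as discussed below) gives $\Delta_t \le c\eps^2/\log n$ with probability at least $1 - 1/n^{c'}$. On this event the conditional Hoeffding bound yields $\osc(f_t) \le \eps$ with probability $1 - 1/\textup{poly}(n)$. Since $\osc(f_s)$ is monotone non-increasing in $s$ (because $\max_v f_s$ is non-increasing and $\min_v f_s$ is non-decreasing under edge-averaging), this means $\tau_\eps \le t$ with high probability. The rare failure event is absorbed into $\EE[\tau_\eps]$ by combining with the worst-case polynomial bound of Boyd, Ghosh, Prabhakar, and Shah~\cite{boyd2006randomized}: on the failure event, use their bound $\tau_\eps \le \textup{poly}(n)/\eps^2$, contributing only $n^{-c'}\textup{poly}(n)/\eps^2$ to the expectation, which is negligible compared to $\log^2(n)/\eps^4$.

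The main obstacle I expect is twofold, and both parts concern the key lemma. First, establishing the universal $1/\sqrt{t}$ rate across all bounded-degree graphs requires a careful coupling between the pair process $(Z^1,Z^2)$ and a one-dimensional random walk; the rate is tight (saturated on cycles and paths, matching the $\eps^{-4}$ exponent in the final bound), but universal validity is delicate on graphs with heterogeneous local structure. Second, to support the union bound over $\binom{n}{2}$ pairs in the Hoeffding step, a naive Markov inequality on $\EE[\Delta_t]$ is too weak; one must either establish higher-moment bounds on $\|K_t(v,\cdot)\|_2^2$ (e.g., via moment bounds for the meeting probability of $k$ coupled walks), or prove concentration of $\Delta_t$ around its mean using a bounded-differences argument on the Poisson environment.
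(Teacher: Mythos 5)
Your high-level architecture matches the paper's: express $f_t$ as a random doubly-stochastic kernel applied to the i.i.d.\ initial opinions (the paper uses the fragmentation-process duality \eqref{eq:duality}, which is the same object row-by-row), apply Hoeffding conditionally on the Poisson environment, control the relevant $\ell^2$-dispersion of the kernel, union-bound over the graph at time $t_0 = \Theta(\log^2 n / \eps^4)$, and absorb the rare failure event using a worst-case polynomial-time bound (your citation of Boyd et al.\ corresponds to the paper's self-contained Proposition~\ref{prop:arbitraryf_0}). Your decision to Hoeffding the pairwise difference $f_t(v)-f_t(u)$ rather than each $|f_t(u)-\mu|$ is a cosmetic variant of the same step. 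Where your route genuinely diverges is in the proof of the dispersion estimate, and this is exactly where the gaps are.

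The paper's engine is Lemma~\ref{lem:energy} plus Lemma~\ref{lem:Qtail}: a deterministic Dirichlet-energy inequality $\mathcal{E}_t \ge Q_t^3/8$, obtained from a shortest path and Cauchy--Schwarz, fed into a Lyapunov function $Y_t = \mathds 1\{Q_t\ge 2/n\}\exp(-1/Q_t^2)$ whose drift is bounded by $-Y_t/16$. That single supermartingale argument yields the \emph{exponential tail} $\PP(Q_t \ge 6t_*^{-1/2}) \le \exp(-t_*/30)$, which is precisely the high-probability input needed for your union bound over $\binom n2$ pairs. You instead propose to identify $\EE\|K_t(v,\cdot)\|_2^2$ with the coincidence probability of two walks in the shared environment and compare the graph distance $d_G(Z^1_t,Z^2_t)$ with a reflected one-dimensional walk. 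This runs into two concrete obstacles. First, the stochastic-domination step is not valid in the generality you need: on a bounded-degree graph the distance process $d_G(Z^1,Z^2)$ is not a nearest-neighbor birth--death chain and its transition rates near $0$ depend on the local geometry (e.g., on a star, from distance $1$ the walkers jump to distance $2$ at rate $\Theta(n)$ and meet at rate $\Theta(1)$, which is nothing like a symmetric 1D walk -- the bound happens to hold there, but not by your coupling). Producing a coupling that is universally valid across all connected bounded-degree graphs is, as you suspect, the hard part, and it is not clear how to push it through without effectively reinventing a Dirichlet-form argument. Second, even granting the first-moment bound, you explicitly acknowledge that you do not have the concentration needed to survive the union bound; this is not a side issue, it is the crux, and the paper resolves it directly with the exponential supermartingale rather than by separately boosting moments. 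Also note that the lemma as you stated it, $\EE\|K_t(v,\cdot)\|_2^2 \le C/\sqrt t$ for all $t\ge 1$ on every connected bounded-degree graph, is false for finite graphs once $t \gg n^2$ (the quantity is bounded below by $1/n$); you need the truncation $t_* = \min(t,n^2)$ exactly as in the paper, and the same truncation propagates into the final estimate, where the very small $\eps$ regime must then be handled by the worst-case bound alone, as in the paper's proof.

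In short: your outer scaffolding and the paper's are essentially the same, and where you claim a different route -- the two-walker coupling -- the key steps are not established and are likely to be substantially harder than the paper's Lyapunov-function argument, which delivers the required tail bound in one stroke and works on arbitrary connected graphs without any geometric case analysis.
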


The bound in Theorem~\ref{theorem finite} cannot be improved for general graphs. Indeed, it is tight for the cycle of length $n$; see Theorem~\ref{theorem tightness}. 

\begin{figure}[h]
    \centering
\includegraphics[width=1.0\linewidth,clip]{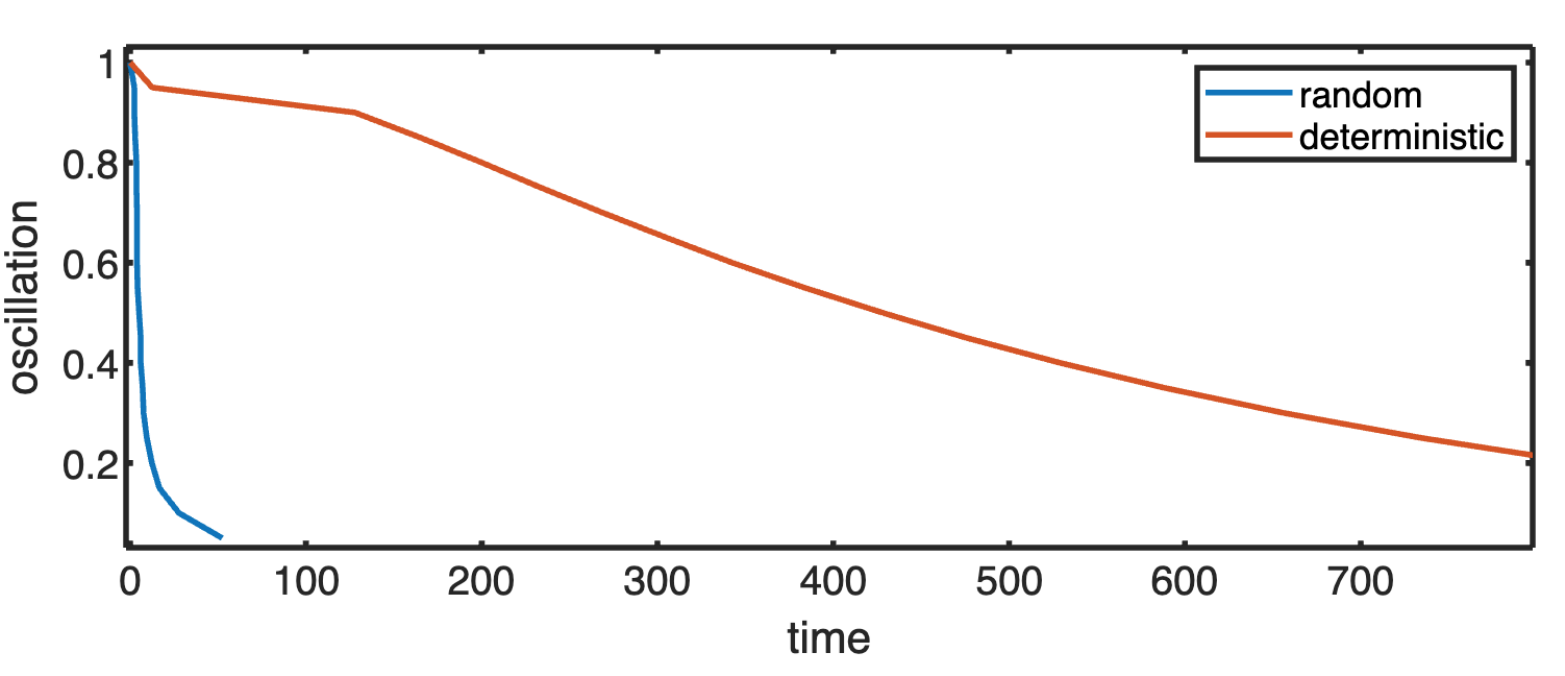}    
    \caption{A simulation of the edge-averaging process on the $100\times 100$ discrete torus. The graphs show the oscillation of the opinions $  \osc(f_t)$ as a function of time for random (blue) and deterministic (orange) initial opinions.}
    \label{fig:charts}
\end{figure}
\subsection{Local convergence}
Theorem~\ref{theorem finite} takes a global view of the system, referring to the oscillation of the opinions over the entire network.
When the network is massive (or infinite), it is more natural to study the convergence of the opinion at a fixed vertex.  This local point of view allows us to obtain bounds on convergence rate that do not depend on the size and structure of the network. The next result, which applies to both finite and infinite graphs,  is a local counterpart to Theorem \ref{theorem finite}, and is also the key to its proof. 

Throughout the paper, except Proposition~\ref{prop:arbitraryf_0}, the initial opinions are assumed to be i.i.d.\ random variables and their expectation is denoted by $\mu:=\EE[f_0(v)]$. 
\begin{thm}\label{theorem local bounded}
   Suppose that the initial opinions $\{f_0(v)\}_{v \in V}$ are i.i.d.\ and take values in $[-1,1]$. Fix $o\in V$. Then for every $t\ge 0$   and $\eps <1$, we have 
   \begin{equation}
    \mathbb P \big( |f_t(o)-\mu |\ge \eps  \big) \le 3 \exp \Big( -\frac{\eps ^2 \sqrt{t_*}}{12} \Big)\,,
\end{equation}
where  $t_*:=\min(t,|V|^2)$ (so $t_*=t$ if $V$ is infinite).
\end{thm}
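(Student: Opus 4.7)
My plan is to use a random-walk duality to reduce the theorem to a clock-level concentration bound for a single random scalar, and then invoke Hoeffding's inequality conditionally on the Poisson clocks. Write $\mathcal P$ for the collection of Poisson clocks driving the process. Given $\mathcal P$, I will define a \emph{dual walk} $(X_s)_{s\ge 0}$ by setting $X_0=o$ and, whenever the clock of an edge $uv$ rings at time $s$ with $X_{s-}\in\{u,v\}$, letting $X_s$ be a uniformly chosen endpoint of $uv$ (via a fresh independent coin). A short induction over the rings in $[0,t]$ gives the duality
$$f_t(o)=\sum_{v\in V}\phi_t(v)\,f_0(v),\qquad \phi_t(v):=\P[X_t=v\mid\mathcal P],$$
so that $\phi_t$ is a random probability measure on $V$, determined by $\mathcal P$, which records the weight with which each initial opinion enters $f_t(o)$.

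Since the $\{f_0(v)\}$ are i.i.d.\ in $[-1,1]$ and the weights $\phi_t(v)$ are $\mathcal P$-measurable with $\sum_v \phi_t(v)=1$, Hoeffding's inequality applied conditionally on $\mathcal P$ yields
$$\P\!\left[|f_t(o)-\mu|\ge\eps\mid\mathcal P\right]\le 2\exp\!\left(-\frac{\eps^2}{2V_t}\right),\qquad V_t:=\sum_v\phi_t(v)^2.$$
The theorem then reduces to the clock-level estimate
$$\P\!\left[V_t>\tfrac{6}{\sqrt{t_*}}\right]\le\exp(-\sqrt{t_*}/12),\qquad(\star)$$
since splitting on $\{V_t\le 6/\sqrt{t_*}\}$ and combining with conditional Hoeffding gives, for $\eps\le 1$,
$$\P[|f_t(o)-\mu|\ge\eps]\le e^{-\sqrt{t_*}/12}+2e^{-\eps^2\sqrt{t_*}/12}\le 3e^{-\eps^2\sqrt{t_*}/12}.$$

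The main obstacle is proving $(\star)$. As a first step I would establish the in-expectation bound $\E[V_t]\le C/\sqrt{t_*}$. For this, note that $\E[V_t]=\P[X_t=X'_t]$ where $X,X'$ are two dual walks sharing $\mathcal P$ but using independent coins, so $\E[V_t]$ becomes a universal on-diagonal heat-kernel quantity that I would control by a Nash-type isoperimetric estimate using only the bounded-degree structure. Intuitively, by time $t$ the dual walk has spread over $\gtrsim\sqrt{t_*}$ vertices, saturating at $n$ once $t\gtrsim n^2$---which is precisely where the truncation $t_*=\min(t,|V|^2)$ enters. The hard step is upgrading this expectation bound to the exponential tail $(\star)$. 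For that I would use an exponential-martingale (Doleans--Dade) estimate applied to $V_t$, viewed as a monotone decreasing functional of the Poisson clocks: the jump at a ring on edge $uv$ at time $s$ equals $-\tfrac12(\phi_{s-}(u)-\phi_{s-}(v))^2$ and is bounded by $2V_{s-}$, which is the multiplicative structure needed to obtain sub-exponential concentration of $V_t$ at scale $1/\sqrt{t_*}$ and hence $(\star)$.
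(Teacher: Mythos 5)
Your overall strategy---duality, conditional Hoeffding on the clock-measurable weights, then reducing to a tail bound $(\star)$ on the quadratic dispersion $V_t$---is the same as the paper's (with $V_t$ playing the role of the paper's $Q_t$). One correctable issue up front: you run the dual walk \emph{forward} in time, but the identity $f_t(o)=\sum_v \phi_t(v) f_0(v)$ holds only for the walk driven by the \emph{time-reversed} clock rings in $[0,t]$. If the clocks of $\{o,a\}$ and then $\{o,b\}$ ring, your forward weights on $(o,a,b)$ are $(\tfrac14,\tfrac12,\tfrac14)$, whereas unwinding the averages gives $(\tfrac14,\tfrac14,\tfrac12)$. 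The law of the weight vector is the same either way by reversibility of the Poisson clocks, but the conditional Hoeffding step needs the pointwise identity given $\mathcal P$, so the construction must be fixed.

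The serious gap is that $(\star)$ is never actually proved, and the two tools you gesture at do not close it. The universal $t_*^{-1/2}$ rate is \emph{not} a consequence of bounded degree together with "a Nash-type isoperimetric estimate": generic Nash inequalities are graph- and dimension-dependent, and on a general bounded-degree graph they give you nothing of the form $t^{-1/2}$. What actually drives the whole argument in the paper is a one-dimensional Nash inequality that holds on \emph{every} graph because $m_t$ is a probability measure, namely $\mathcal E_t \ge Q_t^3/8$ whenever $Q_t\ge 2/n$ (Lemma~\ref{lem:energy}, proved by Cauchy--Schwarz along a shortest path from a maximizer of $m_t$ to the set where $m_t$ is at most half the maximum). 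Your proposed upgrade to an exponential tail also aims the estimate in the wrong direction: to show $V_t$ is small with exponentially high probability you need a \emph{lower} bound on the magnitude of the negative drift of $V_t$ (which equals $\tfrac12\mathcal E_t$), not an \emph{upper} bound on individual jumps; a multiplicative upper bound on the jump only says $V_t$ cannot decrease too fast, which is the opposite of what $(\star)$ needs. The paper closes this by verifying that $e^{t/16}\,\mathds 1\{Q_t\ge 2/n\}\,\exp(-1/Q_t^2)$ is a supermartingale, and both the choice of exponential and the drift computation rest on the energy lemma. Without some version of that lemma, your sketch of $(\star)$ does not go through.
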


This theorem is tight for the cycle, as we show in Proposition~\ref{prop:lowlocal}. 

Next, we consider unbounded initial opinions. Gantert and Vilkas \cite{gantert2024averaging} showed that if $V$ is infinite and the i.i.d.\ initial opinions $\{f_0(v)\}$ are in $L^2$, then $f_t(v) \to \mu$ in $L^2$ for every $v\in V$. 
The following theorem extends their result to $L^p$ for every $p\ge 1$, and gives a sharp rate of convergence.
We state it in a form that applies to both finite and infinite graphs.

\begin{thm}\label{theorem:Lp}
For every $p\in[1,\infty)$, there exists $C_p>0$ (depending only on $p$), such that if the i.i.d.\ initial opinions $\{f_0(v)\}_{v \in V}$ are in $L^p$, then for every $t>0$ and every $o\in V$,
\begin{equation}\label{eq:p}
\|f_t(o)-\mu\|_p\leq C _p\|f_0(o)-\mu\|_p \cdot
\begin{cases}
    t_*^{\frac{1-p}{2p}}&p\in [1,2),\\
    t_*^{-1/4}&p\geq 2,
\end{cases}
\end{equation}
where  $t_*:=\min(t,|V|^2)$. 
Moreover, if $V$ is infinite and  $p \in [1,2)$, then
$\|f_t(o)-\mu\|_p =o\big(t^{\frac{1-p}{2p}}\big)$ as $t \to \infty$. \newline (In particular $f_t(o) \to \mu$ in $L^1$ if $f_0(o) \in L^1$.)
\end{thm}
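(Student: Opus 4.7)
The plan is to write $f_t(o)-\mu=\sum_v w_t(v)X_v$, where $X_v:=f_0(v)-\mu$ are i.i.d.\ centered and $w_t=(w_t(v))_{v\in V}$ is the random probability vector on $V$ determined by the Poisson clocks (so $w_t(v)\ge 0$ and $\sum_v w_t(v)=1$ by conservation of mass). Since the clocks are independent of $f_0$, conditional on the clocks the $X_v$ remain i.i.d.\ centered, so Rosenthal's inequality (for $p\ge 2$) and the Marcinkiewicz--Zygmund inequality (for $p\in[1,2)$) reduce the problem to moments of the random scalar $S_t:=\sum_v w_t(v)^2$.

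The central ingredient is the weight estimate $\E[S_t^{q}]\le C_q\, t_*^{-q/2}$, valid for every $q\ge 1$, which I would prove by an auxiliary Rademacher coupling. Run the process with i.i.d.\ $\eta_v=\pm 1$ initial opinions (independent of the clocks) and set $g_t(o):=\sum_v w_t(v)\eta_v$. Theorem~\ref{theorem local bounded} applied to the bounded $\eta_v$ gives a sub-Gaussian tail for $g_t(o)$ that integrates to $\E|g_t(o)|^{2q}\le C_q t_*^{-q/2}$, while conditional Jensen (the map $y\mapsto y^q$ is convex for $q\ge 1$) yields $\E[g_t(o)^{2q}\mid w_t]\ge S_t^{q}$; combining these two bounds proves the claim. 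This is the main technical step: the $L^2$ bound $\E S_t\lesssim t_*^{-1/2}$ alone is insufficient, since Jensen's inequality gives $\E[S_t^q]\ge(\E S_t)^q$, the wrong direction for an upper bound.

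With this in hand, the two ranges proceed in parallel. For $p\ge 2$, Rosenthal's inequality applied conditional on the clocks to the independent centered summands $w_t(v)X_v$ gives $\E[|f_t(o)-\mu|^p\mid w_t]\le C_p(\E|X|^p\cdot\sum_v w_t(v)^p+(\E X^2)^{p/2}S_t^{p/2})$. Using $\|w_t\|_p\le\|w_t\|_2$ (so $\sum_v w_t(v)^p\le S_t^{p/2}$) and $\|X\|_2\le\|X\|_p$, both terms are dominated by $C_p'\|X\|_p^p S_t^{p/2}$; taking expectation and applying the weight estimate with $q=p/2$ yields the $t_*^{-1/4}$ rate. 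For $p\in[1,2)$, the Marcinkiewicz--Zygmund inequality combined with subadditivity of $y\mapsto y^{p/2}$ ($p\le 2$) yields $\E[|f_t(o)-\mu|^p\mid w_t]\le C_p\|X\|_p^p\sum_v w_t(v)^p$. Log-convexity of $\ell^r$-norms on $V$ (using $\|w_t\|_1=1$ and $\|w_t\|_2=S_t^{1/2}$) gives $\sum_v w_t(v)^p\le S_t^{p-1}$, and concave Jensen ($p-1\le 1$) with the weight estimate at $q=1$ yields the $t_*^{(1-p)/(2p)}$ rate.

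For the ``moreover'' assertion, I would use truncation to exploit the gap $(1-p)/(2p)>-1/4$ when $p<2$. Given $M>0$, split $X=U+V$ with $U:=X\mathbf{1}_{|X|\le M}-\E[X\mathbf{1}_{|X|\le M}]$ (so $|U|\le 2M$) and $V:=X-U$; by dominated convergence $\|V\|_p\to 0$ as $M\to\infty$. Linearity of the averaging process in the initial data gives $f_t(o)-\mu=U_t(o)+V_t(o)$ under the same clocks. Theorem~\ref{theorem local bounded} applied to $U$ gives $\|U_t(o)\|_p\le C_p M\, t^{-1/4}$, and the bound just proved applied to $V$ gives $\|V_t(o)\|_p\le C_p\|V_0\|_p\, t^{(1-p)/(2p)}$. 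Dividing by $t^{(1-p)/(2p)}$ and letting $t\to\infty$, the $U$-contribution vanishes and we are left with $\limsup_{t\to\infty}t^{(p-1)/(2p)}\|f_t(o)-\mu\|_p\le C_p\|V_0\|_p$, which tends to $0$ as $M\to\infty$.
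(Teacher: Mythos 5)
Your proof is correct, and for the case $p\in(1,2)$ it takes a genuinely different and more elementary route than the paper. The paper sets up a linear operator $T_t: L^1(\Omega_0)\to L^1(\Omega)$ realizing $F\mapsto f_t(o)-\mu$, proves the endpoint bounds $\|T_t\|_{L^1\to L^1}\le 2$ and $\|T_t\|_{L^2\to L^2}\le 3t_*^{-1/4}$, and invokes Riesz--Thorin interpolation to obtain the fractional exponent $t_*^{(1-p)/(2p)}$. You instead stay at the level of sums of independent summands: Marcinkiewicz--Zygmund plus subadditivity of $y\mapsto y^{p/2}$ reduces to $\sum_v m_t(v)^p$, log-convexity of $\ell^r$-norms (interpolating $\|m_t\|_1=1$ and $\|m_t\|_2=Q_t^{1/2}$) gives $\sum_v m_t(v)^p\le Q_t^{p-1}$, and a final concave Jensen step lands on $(\E Q_t)^{p-1}$. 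This avoids the abstract operator framework entirely at the mild cost of an extra multiplicative constant in the $p=1$ endpoint, where the paper's Lemma~\ref{lem:L1L2} gives $\|f_t(o)\|_1\le\|f_0(o)\|_1$ exactly. For $p\ge 2$ the two arguments are close in spirit (the paper uses Marcinkiewicz--Zygmund with Burkholder's sharp constant $(p-1)^p$ and Jensen's inequality on the probability vector $m_t^2/Q_t$; you use Rosenthal's inequality and $\|m_t\|_p\le\|m_t\|_2$; both reduce to controlling $\E[Q_t^{p/2}]$). There your moment bound $\E[Q_t^{q}]\le C_q t_*^{-q/2}$ is obtained by a Rademacher coupling plus Theorem~\ref{theorem local bounded} plus convex conditional Jensen; the paper gets the same bound more directly from the tail estimate in Lemma~\ref{lem:Qtail} via $\E[Q_t^{p/2}]\le(6t_*^{-1/2})^{p/2}+\PP(Q_t\ge 6t_*^{-1/2})$, and since Theorem~\ref{theorem local bounded} is itself proved from Lemma~\ref{lem:Qtail}, your detour is valid but unnecessarily indirect. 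For the ``moreover'' clause, both proofs approximate the initial law by an $L^2$ random variable and split by linearity; your specific choice is centered truncation, the paper's is a generic $L^2$ approximant, but the mechanism (the $t^{-1/4}$ rate for the bounded piece beats $t^{(1-p)/(2p)}$ when $p<2$) is identical.
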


Proposition~\ref{theorem:lowerboundLp} shows that these upper bounds are essentially tight. In the setting of the preceding theorem, the triangle inequality yields that
$\|f_t(o)-f_t(v)\|_p \to 0$ for any two fixed nodes $o,v$; this is another natural sense of local convergence. Similar inferences can be made from Theorems \ref{theorem local bounded} and \ref{theorem infinite}.

Gantert and Vilkas~\cite{gantert2024averaging} conjectured that $f_t(v)\to \mu $ almost surely when the initial opinions are in $L^1$.
Our main result for infinite graphs is a proof of their conjecture under a stronger moment assumption. 

 \begin{thm}\label{theorem infinite}
    Suppose that $V$ is infinite, and for some $p>4$, the i.i.d.\ initial opinions $\{f_0(v)\}_{v \in V}$ are in $L^p$. Then for every $v\in V$, we have $\lim_{t\to\infty}f_t(v)=\mu$ almost surely. 
\end{thm}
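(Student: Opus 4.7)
The plan is to combine the $L^p$ convergence rate from Theorem~\ref{theorem:Lp} with a Borel--Cantelli argument along integer times, controlling continuous-time fluctuations via a forward ``history'' representation of $f_t(o)$. For each integer $n\ge 0$ and each $s\ge n$, the linearity of the update rule \eqref{equation definition} yields a convex-combination representation
\[
f_s(o) \;=\; \sum_{v\in V} w_{s}^{n}(o,v)\,f_n(v),
\]
where $w_s^n(o,\cdot)$ is a probability distribution on $V$ that is a deterministic function of the Poisson clock rings in $[n,s]$ only. Writing $H_s^n := \supp w_s^n(o,\cdot)$, this ``history'' is therefore independent of $\sigma(\{f_n(v)\}_{v\in V})$, which is determined by $f_0$ and the clocks in $[0,n]$. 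Since $\mu=\sum_v w_s^n(o,v)\,\mu$,
\[
|f_s(o)-\mu| \;\le\; \max_{v\in H_s^n}|f_n(v)-\mu|,
\]
and because the weights only spread, $H_s^n \subseteq H_{n+1}^n$ for every $s \in [n,n+1]$.

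To bound $\EE|H_{n+1}^n|$, I would time-reverse the weight evolution: starting from a single particle placed at $o$ at time $n+1$, each ring in $[n,n+1]$ of an edge $uv$ incident to the particle's current location produces an additional particle on the opposite endpoint, and $H_{n+1}^n$ is the set of vertices occupied by particles at time $n$. Since $G$ has bounded degree $\Delta$, the total particle count is stochastically dominated by a Yule process of rate $\Delta$, so $\EE|H_{n+1}^n|\le e^{\Delta}$. Applying Theorem~\ref{theorem:Lp} with the given $p>4 \ge 2$ together with Markov's inequality gives
$\PP(|f_n(v)-\mu|>\eps)\le \eps^{-p}C_p^{\,p}\|f_0(o)-\mu\|_p^{p}\,n^{-p/4}$
uniformly in $v$. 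Combining this with the convexity bound by a union bound conditional on $H_{n+1}^n$, and using the independence noted above, one obtains
\[
\PP\!\left(\sup_{s\in[n,n+1]}|f_s(o)-\mu|>\eps\right) \;\le\; e^{\Delta}\,\eps^{-p}C_p^{\,p}\|f_0(o)-\mu\|_p^{p}\,n^{-p/4}.
\]
This right-hand side is summable in $n$ precisely because $p/4>1$, and Borel--Cantelli applied along a countable sequence $\eps_k\downarrow 0$ then yields $f_t(o)\to \mu$ almost surely.

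The main obstacle I expect is the careful bookkeeping around the history: verifying the convex-combination representation rigorously and ensuring that the weights (and hence $H_s^n$) depend only on the clock rings in $[n,s]$, so that the independence used in the union-bound step is valid. Once this is in place, the threshold $p>4$ arises naturally from balancing the $e^{\Delta}$-factor from the branching history against the $n^{-p/4}$ decay provided by Theorem~\ref{theorem:Lp}; a finer dyadic subsequence would worsen the history bound super-exponentially, so unit-length intervals are the efficient choice.
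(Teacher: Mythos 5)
Your proof is correct and uses the same overall architecture as the paper: reduce to integer times, deduce summability from the $t^{-p/4}$ tail given by Theorem~\ref{theorem:Lp} and Markov's inequality (which is precisely where $p>4$ enters), and apply Borel--Cantelli along a sequence $\eps_k\downarrow 0$. The genuine difference lies in how you control $\sup_{s\in[n,n+1]}|f_s(o)-\mu|$. The paper's argument is leaner: it applies the strong Markov property at the first deviation time $\tau_k=\inf\{t>k-1:|f_t(o)-\mu|>\eps\}$ and observes that, conditional on $A_k$, with probability at least $e^{-\deg(o)}$ no clock adjacent to $o$ rings in $(\tau_k,k]$, which freezes $f_{\cdot}(o)$ and gives $\PP(|f_k(o)-\mu|\ge\eps)\ge e^{-\deg(o)}\PP(A_k)$ directly. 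Your route instead invokes the duality representation $f_s(o)=\sum_v w_s^n(o,v)f_n(v)$, the monotonicity $H_s^n\subseteq H_{n+1}^n$ (which does hold: the backward support process from time $n+1$ dominates, at every intermediate time, the backward support process from time $s$, since both start from $\{o\}\subseteq R_s$ and the backward dynamics is monotone in the initial set), and a Yule-process domination giving $\EE|H_{n+1}^n|\le e^{\Delta}$, followed by a conditional union bound using independence of $H_{n+1}^n$ from $\sigma(\{f_n(v)\})$. Both are valid; yours costs a bit more machinery and yields the slightly worse constant $e^{\Delta}$ in place of $e^{\deg(o)}$, while the paper's freeze argument only needs to track the edges incident to $o$ rather than the full spreading history. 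Each produces the same threshold $p>4$, and each would be the natural move depending on whether one is thinking in terms of stopping times or in terms of the fragmentation/history picture developed elsewhere in the paper.
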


\subsection{Other related work}
Aldous and  Lanoue~\cite{MR2908618}
described the basic properties of the edge-averaging process.
Chatterjee, Diaconis, Sly, and Zhang \cite{MR4385355} found the exact convergence time for the important special case of a fully connected network with arbitrary bounded initial opinions. Their result was followed by similar results for other networks (see Caputo, Quattropani, and Sau \cite{caputo2023cutoff}, and Movassagh, Szegedy, and Wang \cite{movassagh2024repeated}).

An important ingredient in the proof of $L^2$ convergence by Gantert and Vilkas~\cite{gantert2024averaging} mentioned before Theorem \ref{theorem:Lp}, is a recent result by Gollin
et al.~\cite{gollin2024sharing}, who established a general bound on dispersion of deterministic fragmentation processes.
Gantert, Hydenreich, and Hirscher \cite{gantert2020strictly} study the edge-averaging process in a setting where opinions take values in the unit circle instead of the real line. They showed that on the infinite path graph $\mathbb Z$,  the opinion at each vertex does not converge in probability, but the distance between opinions at adjacent vertices tends to zero in probability.

Golub and Jackson~\cite{golub2010naive}  consider a variety of averaging processes, in the setting where agents receive independent
noisy signals about the true value of a variable and then communicate in a network. A key insight of that paper is that all opinions
in a large society converge to the truth if and only if the influence
of the most influential agent vanishes as the society grows.

\section{Fragmentation}
For a vertex $o\in V$, consider a process $(m_t\colon V\to[0,1])_{t\geq 0}$, which we call the \emph{fragmentation process originating at $o$}. It follows the same Markovian transition law as $(f_t)_{t\geq 0}$, but its initial value is the indicator vector of $o$. That is, $m_0(o)=1$, and $m_0(v)=0$ for $v\in V\setminus\{o\}$. It is well known (see  \cite {gantert2024averaging}, [Eq.~6 and Corollary 2.3] or \cite{MR2908618}) that for every $t>0$, we can embed the opinion dynamics $(f_s)_{s\in[0,t]}$ and the fragmentation process $(m_s)_{s\in[0,t]}$  in the same probability space, so that $f_0$ and $m_t$ are independent, and 
\begin{equation}\label{eq:duality}
  f_t(o)=\sum_{v\in V}m_t(v)f_0(v)\quad\text{a.s.}  
\end{equation}
The idea  behind this embedding is that for every $s\in[0,t]$, the opinion  $f_t(o)$ can be expressed as a weighted average of $\{f_{t-s}(v)\}_{v\in V}$, with weights   $\{m_{s-}(v)\}_{v \in V}$. This can be verified by induction on the clock rings in the support of the fragmentation process. For example, if $\{o,v\}$ is the last edge to ring before time $t$, then  $f_t(o)=\big(f_{\tau_1}(o)+f_{\tau_1}(v)\big)/2$  for some   $\tau_1<t$. Similarly, $f_{\tau_1}(o)$ and $f_{\tau_1}(v)$ are averages of opinions at earlier times, etc. 

The representation \eqref{eq:duality} provides a powerful tool
for analyzing the edge-averaging process: Since $f_t(o)$ is expressed as a weighted sum of i.i.d.\ random variables, 
classical concentration inequalities (e.g., Hoeffding’s inequality) can be applied to bound deviations of $f_t(o)$ from its mean. The more the measure $m_t(\cdot)$ is dispersed, the more the opinion $f_t(o)$ is concentrated around its mean.

We measure the dispersion of the fragmentation process via the quadratic sum 
\begin{equation}
    Q_t:=\sum _{v\in V} m_t(v)^2 \,.
\end{equation}
Our goal is to bound $Q_t$ from above.
For an edge $e=\{u,v\}\in E$, let $\delta _e:=| m_t(u)-m_t(v)|$. When an edge $e\in E$ rings, the value of $Q_t$ decreases by $\delta_e^2/2$. Thus, $Q_t$ is weakly decreasing and its drift is
\begin{equation}
    D(Q_t):= \lim_{h\downarrow 0}\tfrac 1 h\EE \big[ Q_{t+h}-Q_t\mid \mathcal F _t \big]= -\sum _{e\in E} \frac{\delta_e^2}{2}=  -\frac{\mathcal E_t}{2} \,,
\end{equation}
where 
\begin{equation}
      \mathcal E_t:= \sum_{\{u,v\} \in E} (m_t(u)-m_t(v))^2 \,.
\end{equation}
Hence, in order to bound $Q_t$ from above, it will be useful to bound $\mathcal E_t$ from below, as we do in the following lemma. 
\begin{lem}\label{lem:energy}
    Let $t>0$. Suppose that $Q_t\ge 2/n$, where $n=|V|$ (This inequality holds automatically if $V$ is infinite.) Then $\mathcal E_t\ge Q_t^3/8$.
\end{lem}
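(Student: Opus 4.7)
The plan is to use a simple path argument to lower-bound $\mathcal{E}_t$: locate a vertex where $m_t$ is large, find a nearby vertex where $m_t$ is small, and apply Cauchy--Schwarz to the sum of squared jumps along a shortest path between them. Because the fragmentation dynamics preserves $\sum_v m_t(v)=1$, the measure $m_t$ is a probability measure on $V$. Let $v^*\in V$ attain $M := \max_v m_t(v)$; then $M = M\cdot\sum_v m_t(v) \ge \sum_v m_t(v)^2 = Q_t$, and by Markov's inequality the level set $A := \{v : m_t(v) > M/2\}$ satisfies $|A| < 2/M$. Combined with $M \ge Q_t \ge 2/n$ (trivially valid when $V$ is infinite), this gives $|A| < 2/M \le n$, so $A$ is a proper subset of $V$.

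Set $\ell := |A|$. A standard induction on $r$ using connectedness of $G$ shows $|B_r(v^*)| \ge \min(r+1, n)$, so in particular $|B_\ell(v^*)| \ge \ell+1 > |A|$. Hence there exists a vertex $w \in B_\ell(v^*)$ with $m_t(w) \le M/2$. Choose a shortest path $v^* = x_0, x_1, \ldots, x_k = w$ in $G$, so $k = d(v^*, w) \le \ell$. By Cauchy--Schwarz,
\[
\sum_{i=0}^{k-1}(m_t(x_i)-m_t(x_{i+1}))^2 \;\ge\; \frac{1}{k}\left(\sum_{i=0}^{k-1}|m_t(x_i)-m_t(x_{i+1})|\right)^{2} \;\ge\; \frac{(m_t(v^*)-m_t(w))^2}{k} \;\ge\; \frac{M^2}{4k}.
\]
Since the path edges belong to $E$ and $\ell < 2/M$, this yields
\[
\mathcal{E}_t \;\ge\; \frac{M^2}{4k} \;\ge\; \frac{M^2}{4\ell} \;>\; \frac{M^2}{4\cdot(2/M)} \;=\; \frac{M^3}{8} \;\ge\; \frac{Q_t^3}{8}.
\]

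I do not anticipate serious obstacles: the argument is purely combinatorial and the constants fit without slack. The two places requiring a little care are the discrete facts $|A| < 2/M$ and $|B_\ell(v^*)| \ge \ell+1$, which come respectively from $\sum_v m_t(v) = 1$ and from the inductive ball-growth estimate for connected graphs; in the finite case they rely on $\ell < n$, guaranteed by $M \ge 2/n$. In the infinite case the same argument applies verbatim with $1/n$ replaced by $0$, since $|A|$ is automatically finite (any $\lambda>0$ level set of an $\ell^1$ function is finite) and connectedness still yields $|B_\ell(v^*)| \ge \ell+1$.
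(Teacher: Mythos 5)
Your proof is correct and takes essentially the same approach as the paper: identify the vertex achieving $M=\max_v m_t(v)$, find a vertex with $m_t\le M/2$ at distance less than $2/M$, and apply Cauchy--Schwarz to the shortest path, using $Q_t\le M$. The only bookkeeping difference is how the path-length bound is obtained: the paper takes a shortest path to the set $\{v:m_t(v)\le M/2\}$ and observes directly that minimality forces all intermediate vertices to satisfy $m_t\ge M/2$, giving $\ell M\le 2$; you instead bound $|\{v:m_t(v)>M/2\}|<2/M$ by Markov and then invoke the ball-growth property of connected graphs to locate a light vertex within that distance — an extra lemma, but the same estimate and the same constants.
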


\begin{proof}
Let $M:=\max _{u\in V}m_t(u)$ and let $u_0$ be a vertex achieving the maximum. We claim that there is a vertex $w$ for which $m_t(w) \le M/2$. This clearly holds when $V$ is infinite.  For finite $V$, our assumption gives 
\begin{equation}\label{eq:QM}
\frac2n \le Q_t \le M\sum _{u\in V} m_t(u)=M\,.    
\end{equation}
The claim follows, since $\min_w m_t(w) \le \frac1n \sum _{u\in V} m_t(u) =\frac1n$. 
Let $\gamma=(u_0,\ldots ,u_{\ell})$ be a shortest path from $u_0$ to the set $\{w \in V: m_t(w) \le M/2\}$.  
By the Cauchy-Schwarz inequality, 
\begin{equation}
\begin{split}
    \big( m_t(u_0)-m_t(u_{\ell}) \big)^2= \Big( \sum_{i=1}^{\ell}  m_t(u_i)-m_t(u_{i-1}) \Big)^2\\
    \le \ell \sum _{i=1}^\ell  \bigl(m_t(u_{i})-m_t(u_{i-1})\bigr)^2 \le \ell \mathcal E_t.
\end{split}
\end{equation}
Recall from \eqref{eq:QM} that $Q_t \le M$, and  our choice  of $u_0$ and $u_\ell$  gives $m_t(u_0)-m_t(u_{\ell})\ge M-M/2=M/2$.  Therefore,
\begin{equation}
\label{eq:bound}
    Q_t^3 \le M^3\le 4M \big( m_t(u_0)-m_t(u_{\ell}) \big) ^2\le 4 M \ell \mathcal E_t.
\end{equation} 

The minimality of the path $\gamma$ implies that $m_t(u_i)\geq M/2$ for all $i=0,\ldots,\ell-1$ and therefore $\ell M \le 2\sum _{i=0}^{\ell-1}  m_t(u_i)\le 2$. Substituting this estimate into \eqref{eq:bound} finishes the proof of the lemma.
\end{proof}

\begin{lem}\label{lem:Qtail}
    For all $t>0$, we have  
\begin{equation}
    \mathbb P (Q_t\ge 6t_*^{-1/2}) \le  \exp (-t_*/30) \,,   
\end{equation}
where $t_*=\min(t,|V|^2)$. 
Consequently, $\EE[Q_t]\leq 8t_{*}^{-1/2}$.
\end{lem}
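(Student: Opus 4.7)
The plan is to analyze the potential $\phi(q) := 1/q^2$, whose drift is controlled from below using Lemma~\ref{lem:energy}. I start with a short calculation based on the elementary inequality $(1-x)^{-2}-1 \ge 2x$ for $x\in[0,\tfrac12]$: applied with $x = \delta_e^2/(2Q_s)$ this gives $\Delta_e\phi(Q_s) := \phi(Q_s-\delta_e^2/2)-\phi(Q_s) \ge \delta_e^2/Q_s^3$, and summing over $e$ yields $D\phi(Q_s) \ge \mathcal E_s/Q_s^3 \ge 1/8$ whenever $Q_s\ge 2/n$ (automatic for infinite $V$).

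Next I Doob-decompose $\phi(Q_t) = 1 + A_t + M_t$ with $A_t:=\int_0^t D\phi(Q_s)\,ds$ and $M_t$ a martingale with non-negative jumps $\Delta_e\phi$. On the event $E:=\{Q_t\ge 6 t_*^{-1/2}\}$, monotonicity of $Q$ forces $Q_s\ge 6 t_*^{-1/2}\ge 2/n$ for every $s\le t$ (the last inequality uses $t_*\le n^2$), so Lemma~\ref{lem:energy} is available throughout $[0,t]$, $A_t\ge t/8 \ge t_*/8$, and $\phi(Q_s)\le t_*/36$ for all $s\le t$. Consequently
\[
M_t = \phi(Q_t) - 1 - A_t \le \frac{t_*}{36} - 1 - \frac{t_*}{8} \le -\frac{7 t_*}{72}\quad\text{on }E,
\]
so $\P(E)\le \P(M_t\le -7t_*/72)$, reducing the claim to a lower-tail bound for a martingale.

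To establish that tail bound I would stop at $\sigma := \inf\{s:\phi(Q_s)>t_*/36\}$ (so $E\subseteq\{\sigma>t\}$ and $M_t=M_{t\wedge\sigma}$ on $E$). Up to time $\sigma$, the jumps satisfy $\Delta_e\phi\le 3\phi(Q_s) \le t_*/12$, and the key identity
\[
\sum_e(\Delta_e\phi)^2 \le \bigl(\max_e\Delta_e\phi\bigr)\cdot D\phi \le 3\phi\cdot D\phi
\]
controls the predictable quadratic variation by the drift: $\langle M\rangle_{t\wedge\sigma}\le (t_*/12)\cdot A_{t\wedge\sigma}$. A one-sided Freedman-type inequality applied to $-M_{t\wedge\sigma}$, with these jump and quadratic-variation bounds and a careful optimization of constants, then delivers the stated exponential estimate $\exp(-t_*/30)$.

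Finally, the consequence $\EE[Q_t]\le 8 t_*^{-1/2}$ follows by splitting $\EE[Q_t]\le 6t_*^{-1/2} + \P(Q_t>6t_*^{-1/2}) \le 6t_*^{-1/2}+e^{-t_*/30}$ and verifying by elementary calculus that the right-hand side is at most $8t_*^{-1/2}$ (handling $t_*\le 4$ and $t_*\ge 4$ separately; the latter regime uses $e^{-t_*/30}\le 2t_*^{-1/2}$). The main technical difficulty is that the jumps of $M_t$ are proportional to $\phi(Q_t)$ itself, so a naive concentration estimate would produce variance growing like $t_*^2$ and only a constant bound. The stopping-time truncation at $\sigma$ together with the bound tying $\sum(\Delta_e\phi)^2$ to the drift $D\phi$ is precisely what converts this into genuine exponential-in-$t_*$ concentration.
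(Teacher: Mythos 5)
Your reduction to a martingale tail bound is clean and the early steps are correct: the drift computation $D\phi(Q_s)\ge \mathcal E_s/Q_s^3\ge 1/8$ for $\phi=1/Q^2$ is a valid reformulation of Lemma~\ref{lem:energy}, and on $E=\{Q_t\ge 6t_*^{-1/2}\}$ the monotonicity of $Q$ does force $M_t\le -7t_*/72$. But the concluding Freedman step is where the argument breaks down, and the gap is not a matter of ``careful optimization of constants.'' Your bound on the predictable quadratic variation is $\langle M\rangle_{t\wedge\sigma}\le \tfrac{t_*}{12}A_{t\wedge\sigma}$, and $A_{t\wedge\sigma}$ is a \emph{random} quantity that is not deterministically controlled from above; on the event $E$ it is at least $t_*/8$ but can be larger. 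Even if one grants the most favorable deterministic bound suggested by your inequalities, namely $A_{t\wedge\sigma}\approx t_*/8$ and $\langle M\rangle_{t\wedge\sigma}\approx \tfrac{t_*}{12}\cdot\tfrac{t_*}{8}=t_*^2/96$, the exponential (Freedman/Azuma) estimate for a martingale with non-positive jumps yields
\[
\P\Bigl(-M_{t\wedge\sigma}\ge \tfrac{7t_*}{72},\ \langle M\rangle_{t\wedge\sigma}\le \tfrac{t_*^2}{96}\Bigr)\ \le\ \exp\Bigl(-\tfrac{(7t_*/72)^2}{2\cdot t_*^2/96}\Bigr)=\exp(-49\cdot 48/72^2)\approx e^{-0.45},
\]
a \emph{constant}, not $e^{-t_*/30}$. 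The same constant appears for any $A_{t\wedge\sigma}$ of order $t_*$, and the self-normalized version of the exponential-martingale bound (using $\langle M\rangle\le \tfrac{t_*}{12}(-M)$ on $E$) gives the same $e^{-O(1)}$. The problem is exactly the one you flag at the end of your note: the quadratic variation scales like $t_*\cdot A$ while the deviation you need is only order $A\sim t_*$, so $(\text{deviation})^2/(\text{q.v.})$ is $O(1)$. Stopping at $\sigma$ and tying $\sum_e(\Delta_e\phi)^2$ to $3\phi\cdot D\phi$ does not fix this, because $\phi$ itself is allowed to be as large as $t_*/36$ before $\sigma$. Extracting a genuine $e^{-ct_*}$ from the $\phi=1/Q^2$ potential would require some further structural input (e.g.\ tracking how $\phi$, $A$, $\langle M\rangle$ co-evolve, not just their worst-case ratios) rather than a standard concentration inequality.

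The paper avoids this entirely by working with the \emph{exponential} potential $Y_t=\mathds 1\{Q_t\ge 2/n\}\exp(-1/Q_t^2)=\mathds 1\{\cdot\}e^{-\phi(Q_t)}$, which satisfies the multiplicative drift bound $D(Y_t)\le -Y_t/16$ (established via the elementary Claim~\ref{claim:calc} and a two-case split on $\delta_{\max}^2$ vs.\ $Q_t^3$). This makes $e^{t/16}Y_t$ a supermartingale, so $\EE[Y_t]\le e^{-t/16}$, and a single application of Markov's inequality converts the exponential decay of $\EE[Y_t]$ directly into the tail bound, with no need to control quadratic variation at all. The key advantage is that $e^{-\phi}$ responds \emph{superlinearly} to large jumps of $\phi$ (a jump of $\phi$ by $3\phi$ multiplies $e^{-\phi}$ by $e^{-3\phi}$, which is minuscule when $\phi$ is large), whereas $\phi$ responds only linearly, which is why the martingale for $\phi$ accumulates too much variance.

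One minor additional slip: in verifying $6t_*^{-1/2}+e^{-t_*/30}\le 8t_*^{-1/2}$, the inequality $e^{-t_*/30}\le 2t_*^{-1/2}$ actually fails around $t_*=30$ (where $e^{-1}\approx 0.368>2/\sqrt{30}\approx 0.365$), so the split point must be pushed higher — the paper uses $t_*\le 64$ vs.\ $t_*\ge 64$ and checks derivatives — but this is easily repaired once the tail bound itself is in place.
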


\begin{proof}
    Let $\mathcal F_t$ be the $\sigma $-algebra generated by the fragmentation process up to time $t$, and define the process $$Y_t:=\mathds 1 \{ Q_t\ge 2/n \} \exp (-1/Q_t^2) \,.$$
    The heart of the proof is showing that the drift of $Y_t$ satisfies
    \begin{equation}\label{eq D(Y_t)}
        D(Y_t):=\lim_{h\downarrow 0}\tfrac 1 h\EE \big[ Y_{t+h}-Y_t\mid \mathcal F _t \big] \leq -Y_t/{16}.
    \end{equation}
    Inequality \eqref{eq D(Y_t)} trivially holds when $Y_t=0$ and therefore from now on we work on the event that $Q_t\ge 2/n$. 
     Recall that when an edge $e=\{u,v\}\in E$ rings, the value of $Q_t$ decreases by $\delta_e^2/2$, where $\delta _e=| m_t(u)-m_t(v)|$. Thus, 
\begin{equation}
    -D(Y_t)\ge \sum _{e\in E} \big[\exp (-Q_t^{-2})-\exp (-(Q_t-\delta _e^2/2)^{-2})\big].
\end{equation}
In order to estimate the last sum, we write $\delta_{\mathrm{max}}:=\max_{e\in E}\delta_e$ and consider two cases.

\noindent{\bf Case a}: $\delta^2_{\mathrm{max}}\leq Q_t^3$. \newline In this case,      Claim~\ref{claim:calc} with $\lambda =\delta _e^2/(2Q_t^3)\le 1/2$ gives
\begin{equation}
    -D(Y_t)\ge \exp (-Q_t^{-2})\sum _{e\in E} \frac{\delta _e^2}{2Q_t^3}= \frac{Y_t\mathcal E_t}{2Q_t^3} \ge \frac{Y_t}{16},
\end{equation}
where in the last inequality we use Lemma~\ref{lem:energy}.  

\noindent{\bf Case b}: $\delta^2_{\mathrm{max}}> Q_t^3$. \newline
In this case, we  fix an edge $e_*\in E$ for which $\delta _{e_*}^2\ge Q_t^3$. Then 
\begin{equation}
\begin{split}
     -D(Y_t)&\ge \exp\bigl(-Q_t^{-2}\bigr)-\exp\bigl(-(Q_t-\delta _{e_*}^2/2)^{-2}\bigr) \\
     &\ge \exp\big(-Q_t^{-2}\big)-\exp \big(-(Q_t-Q_t^3/2) ^{-2}\big) \\
     &\ge  \exp\big(-Q_t^{-2}\big)/2=Y_t/2,
\end{split}
\end{equation}
where in the second inequality we used Claim~\ref{claim:calc} once again (this time with $\lambda =1/2$). This concludes the proof of \eqref{eq D(Y_t)}.

Define $Z_t:=\exp(t/16)Y_t$. Since $$D(Z_t)=Z_t/16+\exp(t/16)D(Y_t) \,,$$ the inequality \eqref{eq D(Y_t)} implies that $D(Z_t)\leq 0$, so  $\{Z_t\}$ is a supermartingale. Thus, using that $\mathbb E [Z_0]=\mathbb E [Y_0]=1/e$ we obtain
$$\EE[Y_t]=\exp(-t/16)\EE[Z_t]\leq \exp(-t/16)\EE[Z_0] \leq\exp(-t/16).$$ Hence\footnote{The inequality $Y_t \ge e^{-t/36}$ holds if and only if  $Q_t \ge 2/n$ and $Q_t \ge 6t^{-1/2} $.}, by Markov's inequality, we have
\begin{equation}
    \mathbb P \big( Q_t\ge \max (6t^{-1/2},2/n) \big)=\mathbb P \big( Y_t \ge e^{-t/36} \big) \le e^{-t/30}.
\end{equation}
Since $t_*^{-1/2}=\max(t^{-1/2},n)$, the first statement of the lemma follows from the last bound.

The second statement, concerning the expectation of $Q_t$, is clear if $t_* \le 64$,  since $Q_t\le 1$ always.  On the other hand,  if $t_* \ge 64$, then $ \exp(t_*/30)>\frac12 t_*^{1/2}$ by comparing derivatives.
Therefore, the first statement of the lemma yields
\begin{equation}
    \mathbb E [Q_t] \le 6t_*^{-1/2}+ \mathbb P (Q_t\ge 6t_*^{-1/2})<8t_*^{-1/2} \,.
\end{equation}
\end{proof}

\begin{claim}\label{claim:calc}
For any $0< Q\le 1 $ and $0\le \lambda \le 1/2$, we have that 
\begin{equation}
    \exp (-(Q-\lambda Q^3)^{-2}) \le (1-\lambda )\exp (-Q^{-2}).
\end{equation}
\end{claim}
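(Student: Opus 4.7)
The plan is to take logarithms and reduce the claim to a pair of elementary one-variable inequalities. Dividing both sides by $\exp(-Q^{-2})$ and taking logs, the claim is equivalent to
$$(Q-\lambda Q^3)^{-2} - Q^{-2} \ge -\log(1-\lambda).$$
Factoring $Q - \lambda Q^3 = Q(1 - \lambda Q^2)$ rewrites the left-hand side as
$$Q^{-2}\bigl[(1-\lambda Q^2)^{-2} - 1\bigr].$$

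I would next invoke the elementary bound $(1-x)^{-2} - 1 \ge 2x$, valid for all $x \in [0,1)$. This is immediate from $h(x) := (1-x)^{-2} - 1 - 2x$ satisfying $h(0)=0$ and $h'(x) = 2(1-x)^{-3} - 2 \ge 0$ on $[0,1)$. Setting $x := \lambda Q^2$ (which lies in $[0, 1/2]$ since $\lambda \le 1/2$ and $Q \le 1$), this yields
$$Q^{-2}\bigl[(1-\lambda Q^2)^{-2} - 1\bigr] \ge Q^{-2}\cdot 2\lambda Q^2 = 2\lambda.$$

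It then suffices to verify the one-variable inequality $2\lambda \ge -\log(1-\lambda)$ for $\lambda \in [0, 1/2]$. The function $g(\lambda) := 2\lambda + \log(1-\lambda)$ has $g(0)=0$ and $g'(\lambda) = 2 - (1-\lambda)^{-1}$, which is nonnegative precisely when $\lambda \le 1/2$; hence $g$ is nondecreasing on $[0, 1/2]$ and so nonnegative there. Chaining the two inequalities gives $(Q-\lambda Q^3)^{-2} - Q^{-2} \ge 2\lambda \ge -\log(1-\lambda)$, as required.

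There is essentially no obstacle here — the argument is pure calculus. The only subtlety worth flagging is that the hypothesis $\lambda \le 1/2$ is genuinely used: in the very last step, $g(\lambda) = 2\lambda + \log(1-\lambda)$ turns negative for $\lambda$ sufficiently close to $1$, so the constraint cannot be relaxed without strengthening the other factor of the inequality.
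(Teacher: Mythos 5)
Your proof is correct and is essentially the same argument as the paper's, just phrased in logarithmic rather than exponential form: your bound $(1-x)^{-2}-1\ge 2x$ is the paper's $1/(1-x)^2\ge 1+2x$, and your inequality $2\lambda\ge -\log(1-\lambda)$ on $[0,1/2]$ is the paper's $e^{-2\lambda}\le 1-\lambda$.
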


\begin{proof}
The inequality $1/(1-x)^2\ge 1+2x$ for $x< 1$ yields
\begin{equation}
    (Q-\lambda Q^3)^{-2} = Q^{-2}(1-\lambda Q^2)^{-2}\ge  Q^{-2}+2\lambda.
\end{equation}
Thus, using the inequality $\exp(-2x)\leq 1-x$ for $0\leq x\leq 1/2$, we have
\begin{equation}
    e^{-(Q-\lambda Q^3)^{-2}} \le e^{-2\lambda}e^{-Q^{-2}}\le (1-\lambda )e^{-Q^{-2}}.
\end{equation}
This concludes the proof of Claim~\ref{claim:calc}.
\end{proof}

\section{Proofs: bounded initial opinions}
The proof of Theorem \ref{theorem finite} proceeds in two steps. First, we establish Theorem \ref{theorem local bounded}, which uses the i.i.d.\
nature of the initial opinions and the  dispersion of the  fragmentation process, as measured by the quadratic sum $Q_t$. Applying a concentration inequality, we deduce that with high probability, the opinions of all
nodes lie within $\eps$ of consensus by time $t_0$ of order $(\log^2 n)/\eps^4$. 
Second, on the unlikely event that not all opinions are near the mean
by time $t_0$, we switch to Proposition \ref{prop:arbitraryf_0}, which only requires that the initial opinions lie in $[-1,1]$, with no assumption of independence. It shows that  
starting from an arbitrary configuration, the probability of remaining outside $\eps$-consensus at time $t$ decays exponentially at rate $O(n^{-2})$. By combining these
two bounds, we see that the expected time to consensus is dominated by $t_0$, plus a negligible additional term.  This yields the final $O\big((\log^2 n)/\eps^4\big)$ estimate on the expected consensus time.

\begin{proof}[Proof of Theorem~\ref{theorem local bounded}]
    We may assume that $t_*\ge 144$, as otherwise the bound holds trivially. Consider the fragmentation process $\{m_t\}$ originating at $o$ and recall that $Q_t:=\sum _{u\in V} m_t(u)^2$.
By Lemma~\ref{lem:Qtail}, 
\begin{equation}\label{eq:Q}
\mathbb P(Q_t\geq 6t_*^{-1/2})<\exp({-t_*/30}).    
\end{equation}

Next, let $\mathcal F _\VarClock$ be the $\sigma $-algebra generated by all the clock rings. Since the initial opinions are  i.i.d.\ random variables taking values in $[-1,1]$, the identity \eqref{eq:duality} and Hoeffding's Inequality implies that for $\eps \le 1$,
\[
\mathbb P\big(|f_t(o)-\mu |>\eps\mid \mathcal F_\VarClock\big)\le 2\exp \Big( -\frac{\eps^2}{2 Q_t} \Big).
\]
On the complement of the event in \eqref{eq:Q}, the last expression is bounded by $2 \exp ( -\eps^2 \sqrt{t_*}/12 ) $;   therefore, we obtain 
\begin{equation}
\begin{split}
    \mathbb P \big( |f_t(o)-\mu |\ge \eps \big) &=\mathbb E \big[ \mathbb P(|f_t(o)-\mu |>\eps\mid \mathcal F _\VarClock) \big] \\ 
    &\le \mathbb P  (Q_t\ge 6t_*^{-1/2})+2\exp \Big( -\frac{\eps^2 \sqrt{t_*} }{12  } \Big) \\
    &\le 3\exp \Big( -\frac{\eps^2 \sqrt{t_*} }{12  } \Big) ,
\end{split}
\end{equation}
where in the last inequality we used \eqref{eq:Q}.
\end{proof}
Next, we prove the general bound on  consensus time that holds for arbitrary initial opinions in $[-1,1]^V$; such a bound could be deduced from the results of \cite{boyd2006randomized}, but we include a direct argument for the reader's convenience.
\begin{prop}\label{prop:arbitraryf_0}
 Suppose $n=|V|<\infty $. For every function $f_0:V\to [-1,1]$ and every $t>0$, we have $$\mathbb P (\tau_{\eps } > t) \le \frac{2n}{\eps^2}\exp(-2t/n^2) \,.$$
\end{prop}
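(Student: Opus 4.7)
The plan is to track the quadratic deviation
$$S_t := \sum_{v\in V}\bigl(f_t(v)-\bar f\,\bigr)^2, \qquad \bar f := \frac{1}{n}\sum_{v\in V} f_0(v),$$
noting that $\bar f$ is preserved by every edge averaging. Writing $g_t := f_t - \bar f$, three easy facts would be collected first. \textbf{(i)} $S_0\le n$, because $\sum_v g_0(v)^2\le \sum_v f_0(v)^2\le n$ as $|f_0|\le 1$. \textbf{(ii)} $\osc(f_t)^2\le 2 S_t$: letting $M=\max_v g_t(v)\ge 0$ and $-m=\min_v g_t(v)\le 0$ (both signs forced by $\sum_v g_t(v)=0$), we have $\osc(f_t)=M+m$ and $S_t\ge M^2+m^2\ge (M+m)^2/2$. \textbf{(iii)} When an edge $\{u,v\}$ rings, $S_t$ drops by exactly $(g_t(u)-g_t(v))^2/2$, so
$$D(S_t)=-\tfrac12\mathcal E_t, \qquad \mathcal E_t := \sum_{\{u,v\}\in E}(g_t(u)-g_t(v))^2.$$

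The key step is a deterministic Poincar\'e-type inequality: for every mean-zero $g\colon V\to\mathbb R$ on a connected graph with $n$ vertices,
$$\mathcal E(g) \;\ge\; \frac{4}{n^2}\sum_{v\in V}g(v)^2.$$
I would prove this by fixing a spanning tree $T$ of $G$ and, for each ordered pair $(u,v)$, using the unique $T$-path $\gamma_{uv}$, which has at most $n-1$ edges. Cauchy--Schwarz gives $(g(u)-g(v))^2 \le (n-1)\sum_{\{a,b\}\in\gamma_{uv}}(g(a)-g(b))^2$. Summing over ordered pairs and using the identity $\sum_{u,v}(g(u)-g(v))^2 = 2n\sum_v g(v)^2$ (which uses $\sum g = 0$),
$$2n\sum_{v\in V} g(v)^2 \;\le\; (n-1)\sum_{\{a,b\}\in T} c_{ab}\,(g(a)-g(b))^2,$$
where $c_{ab}$ counts the ordered pairs whose $T$-path passes through $\{a,b\}$. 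Deleting $\{a,b\}$ from $T$ splits it into two subtrees of sizes $k$ and $n-k$, so $c_{ab}=2k(n-k)\le n^2/2$. Since $T\subseteq E$ and the summands are non-negative, the right side is at most $(n-1)(n^2/2)\,\mathcal E(g)$, which yields the claim.

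Combining this with the drift formula in (iii) gives $D(S_t)\le -2 S_t/n^2$, so $t\mapsto S_t e^{2t/n^2}$ is a non-negative supermartingale, and $\EE[S_t]\le S_0 e^{-2t/n^2}\le n\,e^{-2t/n^2}$. Because $t\mapsto \osc(f_t)$ is non-increasing, $\{\tau_\eps>t\}=\{\osc(f_t)>\eps\}$, so by (ii) and Markov's inequality
$$\PP(\tau_\eps>t)\;\le\; \PP\bigl(S_t>\eps^2/2\bigr)\;\le\; \frac{2\,\EE[S_t]}{\eps^2}\;\le\; \frac{2n}{\eps^2}\,e^{-2t/n^2}.$$

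The only step requiring real care is the Poincar\'e inequality with constant $4/n^2$: a looser constant would still give an exponential-in-$t/n^2$ bound, but would inflate both the rate in the exponent \emph{and} the prefactor (the factor $2n$ relies on $\osc^2\le 2S_t$ together with $S_0\le n$). The routing argument above, using a spanning tree together with the congestion bound $c_e\le n^2/2$, is the standard way to pin the constant down.
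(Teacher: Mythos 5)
Your proof is correct and reaches the same differential inequality $D(S_t)\le -2S_t/n^2$ as the paper's, via a genuinely different argument for the Poincar\'e step. The paper bounds $\osc(f_t)^2 \le n\,\mathcal E_t$ by applying Cauchy--Schwarz along a single shortest path between the two vertices achieving $\max f_t$ and $\min f_t$, then bounds $H_t \le \frac{n}{4}\osc(f_t)^2$ by comparing $\bar\mu$ against the midrange $(\max f_t + \min f_t)/2$; multiplying these gives $\mathcal E_t \ge \frac{4}{n^2}H_t$. You instead prove the same Poincar\'e inequality $\mathcal E(g)\ge \frac{4}{n^2}\sum_v g(v)^2$ directly via a spanning-tree routing/congestion argument, using the identity $\sum_{u,v}(g(u)-g(v))^2 = 2n\sum_v g(v)^2$ for mean-zero $g$ and the bound $c_{ab}=2k(n-k)\le n^2/2$ on the number of ordered pairs routed through any tree edge. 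The paper's route is shorter here because it only needs to route one path (between the extremes) and turns the oscillation into both a lower bound on $\mathcal E_t$ and an upper bound on $H_t$; your route is the more standard ``textbook'' Poincar\'e proof and has the virtue of making the constant $4/n^2$ transparent without reference to the specific pair of extremal vertices. Both proofs then pass through the same supermartingale $e^{2t/n^2}\cdot(\text{quadratic energy})$, the same implicit fact $\osc(f_t)^2 \le 2S_t$ (which you helpfully make explicit), and the same application of Markov's inequality, so the endgames coincide.
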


\begin{proof}
    Let $\bar \mu :=\frac{1}{n}\sum_{u\in V}f_0(u)$ be the average of the initial opinions (which is also the average opinion throughout the process). The sum  $H_t:= \sum_{u\in V} (f_t(u)-\bar \mu ) ^2 $ never increases. 
    Moreover, if the edge $e=\{u,v\}$ is updated at time $t$, then   $H_t-H_{t-}=-\frac{1}{2}|\nabla f_{t-}(e)|^2$, where
    $|\nabla f(e)|:=|f(u)-f(v)|$.
    Therefore, the drift (see \eqref{eq D(Y_t)}) of $H_t$ equals
    \begin{equation} \label{drifth}
    D(H_t) =-\frac{1}{2}\sum_{e \in E} |\nabla f_t(e)|^2 \,.  
\end{equation}
Suppose that $\osc(f_t)=f_t(w_+)-f_t(w_-)$ and $(w_0,\dots,w_\ell)$ is a shortest path in $G$ from $w_0=w_-$ to $w_\ell=w_+$.
Writing $e_j:=\{w_{j-1},w_j\}$,  we have
$$    
\osc(f_t) = \sum_{j=1}^\ell \big(f_t(w_j)-f_t(w_{j-1})\big)  \le    \sum_{j=1}^\ell |\nabla f_t(e_j)| \,.
$$ 
The  Cauchy-Schwarz inequality and \eqref{drifth} yield 
\begin{equation}
\label{cauchy1}   
\osc(f_t)^2  \le \ell  \sum_{j=1}^\ell |\nabla f_t(e_j)|^2 
 \le -2n D(H_t)\,.
\end{equation}

 Observe that the quadratic polynomial $h_t(x):= \sum_{u\in V} (f_t(u)-x ) ^2$ is minimized at $x=\bar\mu$. In particular $$H_t=  h_t(\bar \mu) \le h_t\Big(\frac{\max f_t+\min f_t}{2}\Big) \le \frac{n}{4}\osc(f_t)^2 \,.
 $$
 Combining this with \eqref{cauchy1}, we deduce that for all $t \ge 0$,
$$H_t  \le -\frac{n^2}{2} D(H_t) \,.
$$
Let $S_t:=\exp(2t/n^2) H_t$. By the derivation preceding Lemma 2.2 in~\cite{elboim2024asynchronous}, we may apply the product rule to calculate   
$$D(S_t)=\tfrac{2}{n^2}\exp(2t/n^2) H_t+\exp(2t/n^2)D(H_t) \le 0\,,$$
so $\{S_t\}$ is a supermartingale. Therefore,
\begin{equation}
    \begin{split}
\PP(\tau_\eps>t)&=\PP(\osc (f_t)>\eps) \le \PP (H_t>\eps^2/2)\\
&=\PP\big(S_t> \exp(2t/n^2)\frac{\eps^2}{2}\big) \\
& \le \EE[S_0] \exp(-2t/n^2)\frac{2}{\eps^2} \le \frac{2n}{\eps^2}\exp(-2t/n^2)\,.\qedhere
    \end{split}
\end{equation}
\end{proof}

\begin{proof}[Proof of Theorem~\ref{theorem finite}]
    Let $t_0:=10^5 \log ^2 n/\eps^4$ and $t_1:=n^3$. By Theorem~\ref{theorem local bounded}, we have that 
    \begin{equation}
        \mathbb P (\tau _\eps \ge t_0 ) \le \sum _{u\in V} \mathbb P \big( |f_{t_0}(u)-\mu |\ge \eps /2  \big) \le  3n e^{-6\log n} =  3n^{-5}.
    \end{equation}
Thus, by Proposition~\ref{prop:arbitraryf_0},
\begin{equation}
\begin{split}
    \mathbb E [\tau _\eps ]  &\le t_0+\int\nolimits _{t_0}^{t_1}  \mathbb P (\tau _\eps >t) \, dt + \int\nolimits _{t_1}^{\infty }  \mathbb P (\tau _\eps >t)\, dt \\
    &\le t_0 + 3(t_1-t_0)n^{-5} + \int _{t_1}^\infty \frac{2n}{\eps^2}\exp(-2t/n^2)\, dt   \\
    &\le t_0 + 3n^{-2}  + \eps^{-2} n^3e^{-2n} \le 2t_0. \qedhere
\end{split}
\end{equation}
\end{proof}

\section{Proofs: initial opinions in $L^p$}

We start with the following special cases of Theorem~\ref{theorem:Lp}.
\begin{lem}\label{lem:L1L2}
Fix a vertex $o\in V$. Then, for every $t>0$,
$$\|f_t(o)\|_1\leq \|f_0(o)\|_1\, .$$
If $\mathrm{Var}(f_0(o))<\infty$, then we also have
$$\mathrm{Var}(f_t(o))=\EE[Q_t]\mathrm{Var}(f_0(o)).$$
\end{lem}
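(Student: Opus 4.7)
The plan is to exploit the duality representation~\eqref{eq:duality}, $f_t(o)=\sum_{v\in V} m_t(v)f_0(v)$, together with two elementary properties of the fragmentation weights that I would record first: $m_t(v)\ge 0$, and $\sum_{v\in V}m_t(v)=1$ almost surely. The latter holds because each clock ring replaces $m_{t-}(u),m_{t-}(v)$ by two copies of their average, leaving their sum unchanged; so the total mass is conserved from its initial value $m_0(o)=1$ (at any finite time only finitely many $m_t(v)$ are nonzero, so the sum makes sense even when $V$ is infinite). Crucially, by the coupling underlying \eqref{eq:duality}, the weights $m_t$ are independent of the initial opinions $\{f_0(v)\}_{v\in V}$.

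For the $L^1$ bound, I would apply the triangle inequality to the weighted sum in \eqref{eq:duality}, then pass the expectation inside using Tonelli (the integrand is nonnegative), use independence of $m_t$ and $f_0$, and finally use that the $f_0(v)$ are identically distributed:
\[
\EE|f_t(o)| \le \sum_{v\in V} \EE[m_t(v)]\cdot\EE|f_0(v)| = \|f_0(o)\|_1 \cdot \EE\!\Big[\sum_{v\in V} m_t(v)\Big]=\|f_0(o)\|_1.
\]

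For the variance identity, I would condition on the $\sigma$-algebra $\mathcal F_{\VarClock}$ generated by the clock rings, so that $m_t$ becomes $\mathcal F_{\VarClock}$-measurable while $\{f_0(v)\}$ remains i.i.d.\ and independent of $\mathcal F_{\VarClock}$. Writing $\sigma^2:=\mathrm{Var}(f_0(o))$, the representation \eqref{eq:duality} is then a linear combination of i.i.d.\ variables with deterministic coefficients $m_t(v)$, so
\[
\EE[f_t(o)\mid\mathcal F_{\VarClock}]=\mu\sum_{v}m_t(v)=\mu,\qquad \mathrm{Var}(f_t(o)\mid\mathcal F_{\VarClock})=\sigma^2\sum_{v}m_t(v)^2=\sigma^2 Q_t.
\]
The law of total variance then gives $\mathrm{Var}(f_t(o))=\sigma^2\EE[Q_t]+\mathrm{Var}(\mu)=\sigma^2\EE[Q_t]$, as required.

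I do not anticipate any real obstacle here: the lemma is essentially a direct application of the duality identity together with standard conditioning and the law of total variance. The only subtlety worth flagging is the case of infinite $V$, which is handled by Tonelli (nonnegativity in the $L^1$ step) and by the a.s.\ finite support of $m_t$ (in the variance step), so no issues of absolute convergence or interchange of sum and expectation arise.
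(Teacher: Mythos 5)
Your proof is correct and follows essentially the same route as the paper: condition on the clock $\sigma$-algebra $\mathcal F_\VarClock$, use the duality identity \eqref{eq:duality} with the i.i.d.\ and independence structure, and compute. In fact your treatment of the variance is slightly more careful than the paper's, which ends with ``taking expectations of both sides'' — strictly speaking one also needs the law of total variance together with the observation that $\EE[f_t(o)\mid\mathcal F_\VarClock]=\mu$ is constant (so the variance of the conditional mean vanishes), which you state explicitly.
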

\begin{proof}
Fix $o\in V$ and $t>0$, and let $m_t(\cdot)$ be the fragmentation process originating at $o$. Recall that $\mathcal F_\VarClock$ is the $\sigma $-algebra generated by the clock rings, $m_t(\cdot)$ is $\mathcal F _\VarClock$-measurable, the random variables $\{f_0(v)\}_{v\in V}$ are i.i.d.\ independent of $\mathcal F _\VarClock$, and $f_t(o)=\sum_{v\in V} m_t(v)f_0(v)$. 

The first part of the lemma follows by observing that
\begin{equation}
    \begin{split}
        \EE\big[ |f_t(o)|\ \big| \ \mathcal F _\VarClock \big]
        &\leq\EE\Big[\sum_{v\in V}m_t(v)|f_0(v)| \ \big| \ \mathcal F _\VarClock\Big]\\
        &=\sum_{v\in V}m_t(v)\EE[|f_0(v)|]=\|f_0(o)\|_1.
    \end{split}
\end{equation}

For the second part of the lemma, we write
\begin{equation}
    \begin{split}
        \mathrm{Var}\big( f_t(o) \ \big| \  \mathcal F _\VarClock \big)&=\mathrm{Var}\Big(\sum_{v\in V}m_t(v)f_0(v) \ \big| \ \mathcal F _\VarClock\Big)\\
        &=\sum_{v\in V}m_t^2(v)\mathrm{Var}(f_0(v))=Q_t\mathrm{Var}(f_0(o)).
    \end{split}
\end{equation}
Taking expectations of both sides conclude the proof.
\end{proof}

\begin{proof}[Proof of Theorem~\ref{theorem:Lp}]
For $p=1$ and $p=2$, the inequality \eqref{eq:p}   follows from Lemmas~\ref{lem:Qtail} and \ref{lem:L1L2}. 

We now consider the case $p>2$, and assume w.l.o.g.\ that $\mu=0$. Note that conditioned on $\mathcal F _\VarClock$, the random variable $f_t(o)$ is  a sum of independent mean zero random variables. Thus, by the Marcinkiewicz–Zygmund inequality (see, e.g., \cite[Section~10.3]{chow2012probability}) with the optimal constant due to Burkholder~\cite[Theorem 3.1]{burkholder1988sharp}, we have
    \begin{equation}
    \begin{split}
        \mathbb E \big[ |f_t(o)|^p \mid \mathcal F _\VarClock \big]&=\mathbb E \Big[ \Big| \sum _{v\in V}m_t(v)f_0(v) \Big|^p \mid \mathcal F _\VarClock \Big] \\
        &\le (p-1)^p \mathbb E \Big[ \Big| \sum _{v\in V}m_t^2(v)f_0^2(v) \Big| ^{p/2} \mid \mathcal F _\VarClock \Big]\\
        &=(p-1)^p Q_t^{p/2} \mathbb E \Big[ \Big| \sum _{v\in V}\frac{m_t^2(v)}{Q_t}f_0^2(v) \Big| ^{p/2} \mid \mathcal F _\VarClock \Big].
         \end{split}
    \end{equation}
    Thus, using the convexity of the function $x\mapsto x^{p/2}$ and Jensen's inequality we obtain
    \begin{equation}
    \begin{split}
        \mathbb E \big[ |f_t(o)|^p \mid \mathcal F _\VarClock \big]&\le (p-1)^p Q_t^{p/2} \mathbb E \Big[  \sum _{v\in V}\frac{m_t^2(v)}{Q_t}|f_0(v)|^{p}  \mid \mathcal F _\VarClock \Big] \\
        &=(p-1)^p Q_t^{p/2}   \sum _{v\in V}\frac{m_t^2(v)}{Q_t}\mathbb E \big[ |f_0(v)|^{p}  \mid \mathcal F _\VarClock \big]\\
        &= (p-1)^pQ_t^{p/2}\mathbb E\big[ |f_0(o)|^p\big].
    \end{split}
    \end{equation}
    By taking expectation and then raising to the power $1/p$ on both sides, we get
    \begin{equation}\label{eq:356789}
    \|f_t(o)\|_p \leq (p-1)\|f_0(o)\|_p\|Q_t^{1/2}\|_p.    
    \end{equation}
    Next, using that $Q_t\le 1$ and Lemma~\ref{lem:Qtail} we obtain 
    \begin{equation}
        \mathbb E [Q_t^{p/2}] \le (6t_*^{-1/2})^{p/2} +\mathbb P (Q_t \ge 6t_*^{-1/2}) =O(t_*^{-p/4})
    \end{equation}
    and therefore $\|Q_t^{1/2}\|_p=O(t_*^{-1/4})$. Substituting this estimate into \eqref{eq:356789} completes the proof of the theorem when $p> 2$.

    Next, suppose that $p\in (1,2)$. Let $(\Omega_0,\mathcal F_0,\mathbb P_0)$ be a probability space on which the initial opinion at a single vertex is defined\footnote{One can take $\Omega_0=\mathbb R$ equipped with the Borel $\sigma$-field and let $P_0$ be the law of $f_0(o)$.}. Let $(\Omega_\VarClock,\mathcal F_\VarClock,\mathbb P_\VarClock)$ be a probability space on which the Poisson clocks, and hence the fragmentation process $m_t(\cdot)$   originating at $o$, are defined. Then the edge-averaging process is defined on $(\Omega,\mathcal G ,\mathbb P):= (\Omega_0,\mathcal F_0,\mathbb P_0)^V \otimes (\Omega_\VarClock,\mathcal F_\VarClock,\mathbb P_\VarClock)$. A state $\omega\in \Omega$ has the form $\omega=((\omega_v)_{v\in V},\omega_\VarClock)$. The initial opinion of every vertex $v\in V$ is a function of $\omega_v$ and the fragmentation process is a function of $\omega_\VarClock$. Define the linear operator $T_t:L^1(\Omega_0,\mathcal F_0,\mathbb P_0)\to L^1(\Omega,\mathcal G ,\mathbb P)$ as follows: 
    \begin{equation}
    \label{eq:def_T}
      (T_t\,F)(\omega):=\sum_{v\in V}m_t(v)(\omega_ \VarClock )F(\omega_v)-\EE_0[F],  
    \end{equation}
    where $\EE_0$ denotes the expectation operator on $(\Omega_0,\mathcal F_0,\PP_0)$.
    The definition of $T_t$ implies that $T_t\,F$ is $f_t(o)-\mu$, where $f_0(v):=F(\omega_v)$  for all $v\in V$. By the Riesz-Thorin theorem (see, e.g., \cite[Page 52]{stein2011functional} or Wikipedia), we have that 
    \begin{equation}\label{eq:thorin}
     \|T_t\|_{L^p(\PP_0  )\to L^p(\PP)} \le \|T_t\|_{L^1(\PP_0)\to L^1(\PP)} ^{2/p-1}\|T_t\|_{L^2(\PP_0 )\to L^2(\PP)}^{2-2/p}. 
    \end{equation}
    Lemma~\ref{lem:L1L2} implies that $\|T_t\|_{L^1(\PP_0 )\to L^1(\PP)} \le 2$, and together with Lemma~\ref{lem:Qtail}, that $\|T_t\|_{L^2(\PP_0 )\to L^2(\PP)} \le 3t_*^{-1/4}$. Thus, by setting $C_p:= 2^{2/p-1}3^{2-2/p}$, we get
    \[
    \|f_t(o)\|_p \leq C_p  t_*^{\frac{1-p}{2p}}\|f_0(o)\|_p,
    \]
    as claimed. 

    Next, suppose that $G$ is infinite, $p \in [1,2)$, and $f_0(v)=F(\omega_v)$, where $F \in L^p(\Omega_0)$. Given $\eps>0$, there exists 
      $H$ in $L^2(\Omega_0)$ such that $\|F-H\|_{L^p(\Omega_0)} < \eps$. 
Therefore,
\begin{equation}
    \begin{split}
        \|f_t(o)-\mu\|_p&=\|T_t(F)\|_p\le \|T_t(H)\|_2+\|T_t(F-H)\|_p \\
        &\le C_2 \|H\|_2 \cdot t^{-1/4}+C_p\eps t^{\frac{1-p}{2p}} \le 2C_p\eps t^{\frac{1-p}{2p}}\,,
    \end{split}
\end{equation}
    for $t$ sufficiently large. This completes the proof.
\end{proof}

\begin{proof}[Proof of Theorem~\ref{theorem infinite}]
Recall that  $f_0(o)\in L^p$ for some $p>4$. Fix $\eps>0$. By Theorem~\ref{theorem:Lp} and Markov's inequality,
\begin{equation}\label{fkbound}
\PP(|f_t(o)-\mu| > \eps)\leq \frac{\|f_t(o)-\mu\|_p^p}{\eps^p}\leq C t^{-p/4}, 
\end{equation}
where the constant $C$ depends on $\eps ,p$ and $\|f_0\|_p$. For every $k \ge 1$, define the event  $$A_k:= \{ \exists t \in [k-1,k] : \, | f_t(o)-\mu|>\eps\} \,. $$
Then using the strong Markov property at time
$$\tau_k=\inf\{t>k-1:|f_t(o)-\mu|>\eps\} \, $$
and considering the conditional probability, given $A_k$,  that no clock adjacent to $o$ rings during  $(\tau_k,k]$, we infer that
\begin{equation} \PP(| f_k(o)-\mu|>\eps) \ge e^{-\mathrm{deg}(o)} \PP(A_k). 
\end{equation}

Thus by  \eqref{fkbound} we have $\mathbb P(A_k)\le C e^{{\rm deg} (o)} k^{-p/4}$. This bound is summable in $k$, so by Borel-Cantelli, almost surely only finitely many of the events $A_k$ occur. Since $\eps>0$ is arbitrary, the proof that $f_t(o)$ converges to $\mu$ almost surely is complete.
\end{proof}

\section{Lower bounds}\label{sec:lowerbound}

In this section, we prove the tightness of the upper bounds in Theorems~\ref{theorem finite}, \ref{theorem local bounded}, and \ref{theorem:Lp}.  The setting is an $n$-cycle with i.i.d.\ unbiased $\pm 1$ initial opinions. To prove the global lower bound (Theorem~\ref{theorem tightness}),  we split the cycle into $n/\ell$ arcs of length $\ell:=C (\log n)/ \eps^{2}$. For the $i$'th
arc, we define  a  biased  measure $\nu_i$ wherein the vertices in that arc have a
higher probability (e.g., $(1+3\eps )/2$) of starting with opinion $+1$.
In each arc, starting with $\nu_i$, local biases persist long enough to keep
 node opinions elevated (Lemma~\ref{lem:f_t}).
We deduce that starting with the average measure $\nu_*$, it is likely that at time $t_0=c\ell^2$,  one or more of the $n/\ell$ arcs will remain sufficiently biased. This forces the entire system’s opinion range to
exceed $2\eps$ with non-negligible probability, making $\mathbb E [\tau _\eps ]$ at least on the order of $(\log ^2 n)/\eps ^4$.  A total-variation argument ensures that the average
$\nu_*$ of the $\nu_i$ is close to the original uniform  $\pm 1$ measure, so events that are likely under $\nu_*$, must also occur with decent probability under
the original uniform measure.

For the local lower bound (Proposition~\ref{prop:lowlocal}), we focus  on a single arc around $o$,  and show that $\mathbb P (f_t(o)\ge \eps )$ is at least of order $\exp (-c\eps ^2\sqrt{t})$ at time $t$. This matches the upper bound from Theorem~\ref{theorem local bounded}, demonstrating its tightness.  The same construction yields tightness of Theorem~\ref{theorem:Lp} when $p \ge 2$.
 Finally, for the case $p \in [1,2)$ of Theorem \ref{theorem:Lp}, initial opinions $\pm 1$ no longer suffice, and we prove that symmetric stable initial opinions yield a lower bound on consensus time that almost matches the upper bound.

\begin{thm}\label{theorem tightness}
    Consider the edge-averaging process on the $n$-cycle with i.i.d.\ $\mathrm{Uniform}\{-1,1\}$ initial opinions, and let  $\eps \in (0,1/3)$. Then provided $n>n_\eps$, we have $$\EE[\tau_\eps]\geq c \log^2(n)/\eps^4,$$
    where $c>0$ is a universal constant.
\end{thm}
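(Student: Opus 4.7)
The plan is to execute the strategy sketched immediately before the theorem. Fix small constants $c_1, C_1 > 0$ (to be determined; we will need $C_1 < 1/9$), set $\ell := C_1 (\log n)/\eps^2$ and $t_0 := c_1 \ell^2$, and partition the $n$-cycle into $n/\ell$ disjoint arcs $A_1, \dots, A_{n/\ell}$ of length $\ell$ with centers $o_1, \dots, o_{n/\ell}$. It suffices to prove $\mathbb{P}(\tau_\eps > t_0) \ge p_0$ for a universal $p_0 > 0$, since then $\mathbb{E}[\tau_\eps] \ge p_0 t_0 = c(\log^2 n)/\eps^4$, as claimed.

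Following the sketch, introduce the biased initial distribution $\nu_i$ under which vertices in $A_i$ are independent $+1$ with probability $(1+3\eps)/2$ and vertices outside $A_i$ are uniform $\pm 1$. The key intermediate lemma (the ``Lemma \ref{lem:f_t}'' announced in the sketch) asserts that $\mathbb{P}_{\nu_i}(f_{t_0}(o_i) \ge 2\eps) \ge p_1$ for a universal $p_1 > 0$. To prove it, condition on the clocks and use the duality \eqref{eq:duality}: then $f_{t_0}(o_i) = \sum_v m_{t_0}(v) f_0(v)$ is a weighted sum of independent $\pm 1$ variables with conditional mean $3\eps \sum_{v \in A_i} m_{t_0}(v)$. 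A non-escape estimate for the fragmentation mass, obtained by showing that the second moment of the center of $m_t$ grows linearly in $t$ (a diffusion-type calculation), gives for $c_1$ small enough that $\sum_{v \in A_i} m_{t_0}(v) \ge 3/4$ with constant probability. On this event the conditional mean is at least $9\eps/4$, while Lemma~\ref{lem:Qtail} gives $Q_{t_0} = O(t_0^{-1/2}) = O(\eps^2/\log n)$ with high probability, so Hoeffding bounds the conditional fluctuation of $f_{t_0}(o_i)$ by $o(\eps)$. Combining these estimates yields $f_{t_0}(o_i) \ge 2\eps$ with probability $\ge p_1$.

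Let $\nu_*$ be the uniform mixture of $\nu_1, \dots, \nu_{n/\ell}$. The event $E := \{\exists i : f_{t_0}(o_i) \ge 2\eps\}$ satisfies $\mathbb{P}_{\nu_*}(E) \ge p_1$, since sampling from $\nu_*$ is equivalent to drawing a uniform $i$ and running $\nu_i$. To transfer this bound to the uniform $\pm 1$ product measure $\nu_\mathrm{unif}$, observe that $d\nu_i/d\nu_\mathrm{unif}(x) = \prod_{v \in A_i}(1+3\eps x_v)$; since the $A_i$'s are disjoint, a direct $\chi^2$-divergence computation gives
\[
d_\mathrm{TV}(\nu_*, \nu_\mathrm{unif})^2 \le \tfrac{1}{4}\mathrm{Var}_{\nu_\mathrm{unif}}\Bigl(\tfrac{\ell}{n}\sum_i \prod_{v \in A_i}(1+3\eps x_v)\Bigr) \le \tfrac{\ell}{4n}\bigl[(1+9\eps^2)^\ell - 1\bigr] \le \tfrac{\ell}{4n} n^{9 C_1},
\]
which tends to $0$ as $n \to \infty$ provided $C_1 < 1/9$. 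Hence $\mathbb{P}_{\nu_\mathrm{unif}}(E) \ge p_1/2$ for large $n$. On $E$, the conserved sample mean $n^{-1}\sum_v f_0(v)$ is within $O(1/\sqrt n) \ll \eps$ of zero with overwhelming probability, so $\max_v f_{t_0}(v) \ge 2\eps$ forces some vertex to have opinion $\le \eps$; therefore $\osc(f_{t_0}) \ge \eps$. This gives $\mathbb{P}_{\nu_\mathrm{unif}}(\tau_\eps > t_0) \ge p_1/3$, hence $\mathbb{E}[\tau_\eps] \ge p_1 t_0/3$, as required.

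The main obstacle is the non-escape estimate at the heart of the intermediate lemma. The fragmentation process is a random measure rather than the law of a single walker, and its dynamics redistribute mass between adjacent vertices when edges fire, so one cannot directly invoke random walk estimates; instead, one must track a suitable quadratic (such as the variance of the center of mass $\sum_v v\, m_t(v)$ identifying cycle vertices with integers on a sufficiently long interval) and show it grows at rate $O(1)$, then apply Chebyshev. Getting all the constants right so that $c_1$ can be chosen small enough for the non-escape estimate, while $C_1 < 1/9$ to maintain control on the total variation, and still arriving at $t_0 \asymp (\log^2 n)/\eps^4$, is the delicate bookkeeping part of the argument.
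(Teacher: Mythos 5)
Your proposal follows the same high-level strategy as the paper: arcs of length $\asymp(\log n)/\eps^2$, the biased product measure $\nu_i$ on each arc, the mixture $\nu_*$ close in total variation to the uniform $\pm 1$ measure, a local lower bound on $f_{t_0}(o_i)$ under $\nu_i$ obtained from the duality \eqref{eq:duality} together with control of the conditional mean and conditional spread, and a final transfer of probability. The total-variation computation and the Hoeffding/Chebyshev step are both fine, and your route from $\max_v f_{t_0}(v)\ge 2\eps$ to $\osc(f_{t_0})\ge\eps$ via the conserved sample mean being $O(1/\sqrt n)$ is a valid (and somewhat different) alternative to the paper's $\pm$-symmetry argument, which instead establishes both $\PP(A^+)\ge 4/7$ and $\PP(A^-)\ge 4/7$ and intersects them.

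However, there is a genuine gap precisely at the point you flag as ``the main obstacle'': the non-escape estimate $\sum_{v\in A_i} m_{t_0}(v)\ge 3/4$ with constant probability. The observable you propose to track, the center of mass $c_t:=\sum_v v\,m_t(v)$, is the wrong quantity: even if $c_t$ stays within $O(\sqrt{t_0})$ of $o_i$, the measure $m_{t_0}$ can spread symmetrically and put most of its weight outside $A_i$, so a Chebyshev bound on $\var(c_t)$ does not control $m_{t_0}(A_i^c)$. Moreover, your remark that ``one cannot directly invoke random walk estimates'' because the fragmentation process is a random measure misses the key identity that makes the paper's argument elementary: as noted in Section~\ref{section:existence}, $m_t(v)=\PP_o(X_t=v\mid\mathcal F_\VarClock)$, so $\EE[m_t(v)]=\PP_o(X_t=v)$ for the continuous-time simple random walk $X_t$ started at $o$. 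With $J$ the set of vertices within distance $4\sqrt t$ of $o$, this gives $\EE[m_t(J^c)]=\PP_o\bigl(d(X_t,o)\ge 4\sqrt t\bigr)\le 1/16$ by Chebyshev applied to $X_t$ (for which $\var(X_t)=t$), and then a single application of Markov's inequality yields $\PP\bigl(m_t(J^c)\ge 1/2\bigr)\le 1/8$, which is exactly the non-escape bound needed (Lemma~\ref{lem:omega}). If one insists on tracking a quadratic functional of $m_t$ directly, the right one is the moment of inertia $\sum_v (v-o)^2 m_t(v)$ (not the center of mass), but the expectation identity above is what makes everything explicit without new drift calculations; this is the missing ingredient in your write-up.
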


Recall that $(m_t)_{t\ge 0}$ denotes the fragmentation process originating at $o$ and that $Q_t=\sum _{v\in V}m_t^2(v)$.

\begin{lem}\label{lem:omega}
Let $G=(V,E)$ be either the $n$-cycle or the infinite path $\mathbb Z$ and let $o\in V$. Suppose that $t_*=\min(t,n^2)\ge 120$ and let $J$ be the set of vertices in $V$ whose distance from $o$ is less than $4\sqrt{t}$. Define the event 
\begin{equation}
    \Omega := \big\{ m_t(J) \ge 1/2,  \  Q_t \le 6t_*^{-1/2} \big\},
\end{equation}
where $m_t(J):=\sum _{v\in J} m_t(v)$. Then $\mathbb P (\Omega ) \ge 4/5$.
\end{lem}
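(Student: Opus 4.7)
The plan is to control the two events defining $\Omega$ separately and conclude with a union bound. The bound on $\{Q_t\le 6 t_*^{-1/2}\}$ is essentially given: Lemma~\ref{lem:Qtail} yields $\mathbb{P}(Q_t>6 t_*^{-1/2})\le e^{-t_*/30}\le e^{-4}<1/20$ under the hypothesis $t_*\ge 120$.

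The substantive step is showing that $m_t(J)\ge 1/2$ with high probability, which I would attack via a random-walk dual of the fragmentation process. Given the Poisson clocks, introduce an independent fair coin at each ring and define a walker $X$ starting at $X_0=o$ as follows: at each ring of an edge $\{a,b\}$, if $X_{s-}\in\{a,b\}$, set $X_s$ to $a$ or $b$ according to the coin (otherwise $X_s=X_{s-}$). A short induction on the rings up to time $t$ shows that the walker's coin flip mirrors the averaging step $m_s(a)=m_s(b)=\tfrac12(m_{s-}(a)+m_{s-}(b))$, yielding the duality $m_t(v)=\mathbb{P}(X_t=v\mid\text{clocks})$. Taking expectations over the clocks then gives $\mathbb{E}[1-m_t(J)]=\mathbb{P}(d(X_t,o)\ge 4\sqrt t)$.

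Since $\mathbb Z$ and the $n$-cycle both have constant degree $2$, the walker $X$ jumps at rate $2\cdot\tfrac12=1$ with symmetric $\pm 1$ increments, so on $\mathbb Z$ one has $\mathrm{Var}(X_t-o)=t$; on the cycle I would lift to $\mathbb Z$, using that the cycle distance is dominated by the $\mathbb Z$-displacement. Chebyshev's inequality then gives $\mathbb{P}(d(X_t,o)\ge 4\sqrt t)\le 1/16$, and Markov's inequality applied to $1-m_t(J)\in[0,1]$ upgrades this to $\mathbb{P}(m_t(J)<1/2)\le 1/8$. A union bound concludes: $\mathbb{P}(\Omega^c)\le 1/8+e^{-4}<1/5$, so $\mathbb{P}(\Omega)\ge 4/5$.

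The main obstacle is setting up the random-walk dual carefully and verifying the identity $m_t(v)=\mathbb{P}(X_t=v\mid\text{clocks})$; once this is in hand, the remaining ingredients—a classical variance computation for a rate-$1$ symmetric walk, together with Chebyshev, Markov, and a union bound—are all routine.
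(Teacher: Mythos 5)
Your proposal is correct and follows essentially the same route as the paper: bound the two events in $\Omega^c$ separately, using Lemma~\ref{lem:Qtail} for the $Q_t$ event and the random-walk duality $\mathbb{E}[m_t(v)]=\mathbb{P}_o(X_t=v)$ together with Chebyshev and Markov for the $m_t(J)$ event, then union-bound. The only cosmetic difference is that you spell out the coupling construction of the dual walk and the identity $m_t(v)=\mathbb{P}(X_t=v\mid\mathcal{F}_\VarClock)$, which the paper invokes without re-deriving (it is stated in Section~\ref{section:existence}); the numerical estimates ($1/16$, $1/8$, $e^{-4}$, total $<1/5$) match exactly.
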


\begin{proof}
The second inequality in the definition of $\Omega $ holds with probability at least $1-e^{-4}$ by Lemma~\ref{lem:Qtail}, using that $t_*\ge 120$. 
Next, note that $\mathbb E [m_t(u)]=\mathbb P _o(X_t=u)$, where $(X_s)_{s \ge 0}$ is a simple  continuous-time random walk starting from $o$, governed by Poisson clocks on the edges: when the clock of an edge incident to the particle rings, the particle crosses it with probability $1/2$.
Thus, by Chebyshev's inequality, 
\begin{equation}
     \mathbb E [m_t(J^c)] =\mathbb P \big( d(X_t,o) \ge 4\sqrt{t} \big)\le \tfrac{1}{16} \,,
\end{equation}
so $\mathbb P (m_t(J^c) \ge \tfrac12 ) \le \tfrac18$. Therefore, $\PP(\Omega^c) \le \tfrac18+e^{-4}<\tfrac15 $.
\end{proof}

\begin{lem}\label{lem:f_t}
    Let $o$ and $J$ be as in Lemma~\ref{lem:omega} and let $\eps \in (0,1/3)$. Suppose that the initial opinions $\{f_0(v)\}$ are independent $\pm 1$ valued random variables with distribution $\nu _J$, where  $\nu _J (f_0(v)=1)=1/2$ for $v\notin J$ and $\nu _J (f_0(v)=1)=(1+3\eps)/2 $ for $v\in J$. Let $\mathbb P _J:=\nu _J \times \mathbb P _\VarClock$. If $t_* \ge (10/\eps)^4$, then 
    \begin{equation}
        \mathbb P_J \big( f_t(o)\ge \eps  \big) \ge 3/5.
    \end{equation}
\end{lem}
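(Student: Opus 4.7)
The plan is to combine the duality identity $f_t(o)=\sum_{v\in V}m_t(v)f_0(v)$ from \eqref{eq:duality} with a conditional Hoeffding bound on the good event $\Omega$ from Lemma~\ref{lem:omega}. Under $\mathbb P_J$, the opinions $\{f_0(v)\}_{v\in V}$ are independent of $\mathcal F_\VarClock$ (the bias depends on the location $v$, not on the clocks), so conditionally on $\mathcal F_\VarClock$ the quantity $f_t(o)$ is a sum of independent random variables $m_t(v)f_0(v)\in[-m_t(v),m_t(v)]$ whose conditional mean is
\[
\mathbb E_J[f_t(o)\mid \mathcal F_\VarClock]=3\eps\cdot m_t(J).
\]
On the event $\Omega$, Lemma~\ref{lem:omega} guarantees both that this conditional mean is at least $3\eps/2$ and that $Q_t\le 6t_*^{-1/2}$.

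Next I would apply the one-sided Hoeffding inequality conditionally on $\mathcal F_\VarClock$ to control the lower tail. On $\Omega$ this gives
\[
\mathbb P_J\bigl(f_t(o)<\eps\,\big|\,\mathcal F_\VarClock\bigr)\le \exp\!\Bigl(-\frac{(\eps/2)^2}{2Q_t}\Bigr)\le \exp\!\Bigl(-\frac{\eps^{2}\sqrt{t_*}}{48}\Bigr).
\]
The hypothesis $t_*\ge (10/\eps)^4$ yields $\eps^{2}\sqrt{t_*}\ge 100$, so the conditional failure probability is at most $e^{-100/48}<1/6$.

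Finally I would integrate out the clocks. Since $\Omega\in\mathcal F_\VarClock$ and $\Omega$ depends only on the clock marginal (which is the same under $\mathbb P$ and under $\mathbb P_J$), Lemma~\ref{lem:omega} gives $\mathbb P_J(\Omega)\ge 4/5$. Therefore
\[
\mathbb P_J\bigl(f_t(o)\ge \eps\bigr)\ge \mathbb E_J\bigl[\mathbf 1_\Omega\cdot\mathbb P_J(f_t(o)\ge \eps\mid \mathcal F_\VarClock)\bigr]\ge \tfrac{4}{5}\cdot\tfrac{5}{6}=\tfrac{2}{3}\ge \tfrac{3}{5}.
\]

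The argument is essentially bookkeeping: the conceptual content is already packaged into Lemma~\ref{lem:omega} (which controls both the dispersion of $m_t$ and the escape of its mass outside the ball of radius $4\sqrt{t}$) together with the duality identity. The only genuine thing to check is that the constants---bias $3\eps$, mass threshold $1/2$ in the definition of $\Omega$, and time scale $(10/\eps)^4$---are calibrated so that the $1/5$ loss from $\Omega^c$ and the $1/6$ loss from Hoeffding still leave margin above $3/5$; the computation above shows there is comfortable slack.
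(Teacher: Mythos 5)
Your proof is correct and follows essentially the same route as the paper: condition on the good event $\Omega$ from Lemma~\ref{lem:omega}, compute the conditional mean $3\eps\,m_t(J)\ge 3\eps/2$ on $\Omega$, control the lower tail with a concentration inequality, and integrate out the clocks. The only difference is that you apply Hoeffding where the paper applies Chebyshev (bounding $\var_J(f_t(o)\mid\mathcal F_\VarClock)\le Q_t$ and getting $\mathbb P_J(f_t(o)<\eps\mid\mathcal F_\VarClock)\le 4Q_t/\eps^2\le 1/4$ on $\Omega$), a cosmetic swap here since the initial opinions are bounded and both inequalities leave ample slack.
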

\begin{proof}
Note that 
$$\var_J(f_t(o) \mid \mathcal F 
 _\VarClock) \le Q_t $$
 and 
$$\EE_J[f_t(o) \mid \mathcal F 
 _\VarClock] = 3\eps m_t(J) \ge \frac{3\eps}{2}\,\mathds 1_\Omega \,, $$
 so Chebyshev's inequality and the assumption on $t_*$ yield
 $$  
        \mathbb P_J \big( f_t(o)< \eps \mid \mathcal F _\VarClock \big) \,\mathds 1_\Omega \le \frac{4Q_t}{\eps^2} \,\mathds 1_\Omega \le  \frac{24}{\eps^2\sqrt{t_*}} \le \frac14\,.
  $$
  Thus by Lemma \ref{lem:omega},  
  $$\mathbb P_J \big( f_t(o) \ge \eps \big)  \ge \EE\big[\PP_J \big( f_t(o)\ge \eps \mid \mathcal F _\VarClock \big) \,\mathds 1_\Omega \big]  \ge \frac34  \cdot \frac45 \,. \qedhere$$
\end{proof}    
\begin{proof}[Proof of Theorem~\ref{theorem tightness}]
    Let $\ell := \lfloor(\log n)/(36\eps ^2)\rfloor$ and let $k:=\lfloor n/(2\ell) \rfloor $. Let $w_1,\dots ,w_k$ be $k$ vertices with distance  at least $2\ell $ from each other. For $i\le k$, let $J_i$ be the arc centered at $w_i$ consisting of $2\ell-1$ vertices,  and define $\psi_i :\{-1,1\}^V\to \mathbb R$ by
    \begin{equation}
     \psi_i(x):=\prod _{v\in J_i} (1+3\eps x_v).   
    \end{equation}
    Let $\nu $ be the uniform measure on $\{-1,1\}^V$ and define $\nu _i$ by $d\nu _i= \psi _i d\nu $, so for $t=\ell^2/16$, the measure $\nu_i$ is a rotation of $\nu_J$ from Lemma  \ref{lem:f_t}. Finally, let $\nu _*:=\frac{1}{k} \sum _{i=1}^k \nu _i$.    Our next goal is to show that $\nu$ and $\nu_*$ are close in total variation. We first compute the $L^2$ norms of $\psi_i$:
    \begin{equation} \label{normpsi_i}
    \begin{split}
\EE_\nu[\psi_i^2]&=
\prod _{v\in J_i} \frac{(1+3\eps)^2+(1-3\eps)^2}{2} \\
&\le(1+9\eps^2)^{2\ell} 
\le \exp (18\eps^2 \ell ) \le \sqrt{n}\,,
  \end{split}
    \end{equation}
    by our choice of $\ell$.  Thus, since $\psi _i$ are independent with respect to $\nu $, their average $\psi_*:=\tfrac1k\sum_{i=1}^k \psi_i$ satisfies
\begin{equation} \label{normpsi_*}
\var_\nu(\psi_*)=\tfrac{1}{k^2} \sum_{i=1}^k \var_\nu(\psi_i) \le \frac{\sqrt{n}}{k} \le n^{-1/3} \,,
\end{equation}
provided $n$ is large enough.
Therefore,
\begin{equation} \label{totvar}
2\|\nu-\nu_*\|_{\rm TV}=
\|\psi_*-1\|_{L^1(\nu)} \le 
\|\psi_*-1\|_{L^2(\nu)} \le n^{-1/6}\,,
\end{equation}
for large $n$.  Next, set $t_0:=\ell^2/16$ and 
consider the event
$$A^+:=\{\exists w \in V: f_{t_0}(w) \ge \eps \}\,.
$$
For $i \le k$,
define $\PP_i=\nu_i \times \PP_{\VarClock}$. Then, for sufficiently large $n$, we may use Lemma \ref{lem:f_t} with $t=t_0$ to obtain
$$\PP_i(A^+)\ge \PP_i(   f_t(w_i) \ge \eps ) \ge 3/5 \,.$$
 Therefore, the measure
$$\PP_*=\tfrac1k\sum_{i=1}^k \PP_i =\nu_* \times \PP_{\VarClock}$$
also satisfies
$\PP_*(A^+) \ge 3/5.$
Since $$\|\PP_*-\PP\|_{\rm TV}=\|\nu_*-\nu\|_{\rm TV} \le n^{-1/6} \,,$$
we deduce that 
$\PP(A^+) \ge 4/7.$
 Similarly, $\PP(A^-) \ge 4/7$, where
  $A^-:=\{\exists w \in V: f_{t_0}(w) \le -\eps \}$.
Consequently,
$$\PP\big(\osc(f_{t_0}) \ge 2\eps\big) \ge \PP(A^+\cap A^-) \ge 1/7\,. $$
Finally, we obtain
$$\EE[\tau_\eps] \ge t_0\PP(\tau_\eps \ge t_0) \ge \frac{t_0}{7}=\frac{\ell^2}{112} \ge c \frac{\log^2 n}{\eps^4} \,.\qedhere$$
\end{proof}

The next proposition gives a lower bound on $\mathbb P (f_t(o)\ge \eps )$ that matches the upper bound in Theorem~\ref{theorem local bounded}, up to the constant in the exponent.

\begin{prop}\label{prop:lowlocal}
    Consider the edge-averaging process on $G=(V,E)$ where $G$ is either the $n$-cycle or the infinite path $\mathbb Z$ with i.i.d.\ $\mathrm{Uniform}\{-1,1\}$ initial opinions, and let  $\eps \in (0,1/3)$. If $t^*=\min(t,n^2)\ge (10/\eps )^4$, then 
   \begin{equation}
    \mathbb P \big( f_t(o) \geq \eps  \big) \ge \tfrac{1}{80}\exp( -72\eps ^2 \sqrt{t_*})\,.
\end{equation}
\end{prop}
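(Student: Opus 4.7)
The plan is to transfer the lower bound $\PP_J(f_t(o)\ge\eps)\ge 3/5$ from Lemma~\ref{lem:f_t} back to the original uniform $\pm 1$ law by a change-of-measure (second-moment) argument. Let $J$ be the ball of radius $4\sqrt{t}$ around $o$ as in Lemma~\ref{lem:omega}, and let $\nu$ denote the uniform $\{-1,1\}^V$ law and $\nu_J$ the tilted law from Lemma~\ref{lem:f_t}. Because $\nu_J$ differs from $\nu$ only on the coordinates indexed by $J$, the Radon--Nikodym derivative is the product
\[
\psi(x):=\frac{d\nu_J}{d\nu}(x)=\prod_{v\in J}(1+3\eps x_v).
\]
Since the clock process is the same under $\PP$ and $\PP_J$, we also have $d\PP_J/d\PP=\psi$.

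Applying Cauchy--Schwarz to the event $A:=\{f_t(o)\ge\eps\}$,
\[
\PP_J(A)^2=\bigl(\EE[\psi\,\mathds 1_A]\bigr)^2\le \EE[\psi^2]\,\PP(A),
\]
so
\[
\PP(A)\ge \frac{\PP_J(A)^2}{\EE[\psi^2]}.
\]
A direct computation, using that the $x_v$ are i.i.d.\ uniform $\pm 1$ under $\nu$, gives
\[
\EE[\psi^2]=\prod_{v\in J}\frac{(1+3\eps)^2+(1-3\eps)^2}{2}=(1+9\eps^2)^{|J|}\le \exp\!\bigl(9\eps^2 |J|\bigr).
\]

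It remains to bound $|J|$ and assemble the constants. On both the $n$-cycle and on $\mathbb Z$ we have $|J|\le \min(8\sqrt{t}+1,\,n)\le 8\sqrt{t_*}+1$ (since $\sqrt{t_*}=\min(\sqrt{t},n)$). Combining with Lemma~\ref{lem:f_t}, which is applicable because $t_*\ge (10/\eps)^4$, yields
\[
\PP(A)\ge \frac{(3/5)^2}{\exp\!\bigl(9\eps^2(8\sqrt{t_*}+1)\bigr)}\ge \frac{9}{25}\,e^{-9\eps^2}\,e^{-72\eps^2\sqrt{t_*}}\ge \frac{1}{80}\exp\!\bigl(-72\eps^2\sqrt{t_*}\bigr),
\]
where in the last step we used $\eps\le 1/3$, hence $e^{-9\eps^2}\ge e^{-1}$, and $\tfrac{9}{25e}>\tfrac1{80}$.

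There is no serious obstacle; the argument is essentially a tilting/Cauchy--Schwarz dual to the upper bound in Theorem~\ref{theorem local bounded}. The only thing to watch is the constant: the factor $72$ emerges precisely from the product $9\cdot 8$, where the $9$ comes from the $(3\eps)^2$ tilt chosen in Lemma~\ref{lem:f_t} and the $8$ from the radius $4\sqrt{t}$ of $J$ chosen in Lemma~\ref{lem:omega}; these are already the parameters fixed upstream, so the constants match without further work.
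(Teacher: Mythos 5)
Your proof is correct, and the constants work out (you even get a slightly better numerical constant than the paper's $1/80$ would require). The overall architecture is the same: tilt the initial law on the window $J$ to $\nu_J$, use Lemma~\ref{lem:f_t} to get a constant lower bound under $\PP_J$, and transfer back to $\PP$ via the Radon--Nikodym derivative $\psi=\prod_{v\in J}(1+3\eps x_v)$.

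The one genuine difference is in how you transfer the bound across the change of measure. You use Cauchy--Schwarz: $\PP_J(A)^2=\bigl(\EE[\psi\mathds 1_A]\bigr)^2\le\EE_\nu[\psi^2]\,\PP(A)$, then compute $\EE_\nu[\psi^2]=(1+9\eps^2)^{|J|}$ exactly. The paper instead truncates: it defines a high-probability event (under $\nu_J$) on which $\sum_{v\in J}x_v$ does not exceed its $\nu_J$-median, which forces $\psi\le 8\,e^{9\eps^2|J|}$ pointwise on that event; this requires the fact that the mean and median of a binomial distribution differ by at most $1$, and costs a factor of $\nu_J(A^c)\le 1/2$ in the lower bound ($3/5-1/2=1/10$ rather than your $(3/5)^2=9/25$). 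Your second-moment route is arguably cleaner since it avoids the binomial-median fact entirely and needs no truncation event; the paper's first-moment/truncation route is a bit more hands-on but conceptually elementary. Both arrive at the same exponential rate $e^{-72\eps^2\sqrt{t_*}}$, since in each case the $72$ is $9\times 8$ coming from the $(3\eps)^2$ tilt and the window size $|J|\le 8\sqrt{t_*}$ (up to the harmless $+1$ you absorbed into the leading constant).
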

\begin{proof}
Let $\nu$ be the uniform distribution on $\{1,-1\}^V$ and $\PP:=\nu\times\PP_\VarClock$. Let $J$ be as  in Lemma~\ref{lem:omega}, so that $|J| \le 8\sqrt{t_*}$. Define $\nu_J$ and $\PP_J$ as  in Lemma~\ref{lem:f_t}.
Note that the set
\[
A:=\Big\{ x \in \{1,-1\}^V: \, \sum_{v \in J} x_v \le 3\eps|J| +2\Big\}\,,
\]
satisfies $\nu_J(A) \ge 1/2$ as the mean and the median of the binomial distribution differ by at most one (see, e.g., \cite{kaas1980mean}).
For all $x \in A$,
$$\frac{d\nu_J}{d\nu}(x)=\prod_{v \in J}(1+3\eps x_v) \le \exp \Bigl(3\eps\sum_{v \in J} x_v \Bigr) \le 8e^{9\eps^2 |J|}\,. $$
Consider the event $A^*:=\{f_t(o) \ge \eps\} \cap \{f_0  \in A\} \,.$
Since $\frac{d\PP_{\!J}}{d\PP}(x,\cdot)=\frac{d\nu_{\!J}}{d\nu}(x)$ for all $x$, we can infer that
\begin{equation}
\PP _J(A^*)=\EE \Big[\frac{d\PP _J}{d\PP} \mathds 1_{A^*}\Big] \le 8e^{9\eps^2 |J|}\PP (A^*)\le 8e^{72\eps^2 \sqrt{t_*}}\PP (A^*).
\end{equation}
This finishes the proof of the proposition, since by Lemma~\ref{lem:f_t}, we have $\mathbb P _J(A^*) \ge \mathbb P_J (f_t(o)\ge \eps ) -\nu _J(A ^c) \ge 1/10$.
\end{proof}

\begin{prop}\label{theorem:lowerboundLp}
Consider the edge-averaging process on $G=(V,E)$ where $G$ is either the $n$-cycle or the infinite path $\mathbb Z$. For every $p\in[1,\infty)$ and $\delta>0$, there are i.i.d.\ random variables $\{f_0(v)\}_{v \in V}$ in $L^p$, such that if $t^*=\min (t,n^2)>30^4$, then  
\begin{equation}
\|f_t(o)-\mu\|_p\geq 
\begin{cases}
    t_*^{\frac{1-p}{2p}-\delta}&p\in [1,2)\,,\\
    t_*^{-1/4}&p \ge 2\,.
\end{cases}
\end{equation}

\end{prop}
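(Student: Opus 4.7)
The plan is to handle the cases $p\ge 2$ and $p\in[1,2)$ separately, using different initial distributions.

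\textbf{Case $p\ge 2$.} Take $\{f_0(v)\}$ i.i.d.\ uniform on $\{-M,M\}$ for a constant $M>0$ to be fixed; by symmetry $\mu=0$. Apply Proposition~\ref{prop:lowlocal} (scaled by $M$) with $\eps:=10\, t_*^{-1/4}$, a choice that satisfies $t_*=(10/\eps)^4$ exactly and $\eps<1/3$ since $t_*>30^4$. Then $\eps^2\sqrt{t_*}=100$, giving
\[
\PP\bigl(f_t(o)\ge 10M\, t_*^{-1/4}\bigr)\ge \tfrac{1}{80}e^{-7200}=:c_0>0,
\]
whence $\|f_t(o)\|_p^p\ge (10M)^p c_0\, t_*^{-p/4}$. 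Choosing $M:=(10 c_0^{1/p})^{-1}$ absorbs the constant and yields $\|f_t(o)\|_p\ge t_*^{-1/4}$.

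\textbf{Case $p\in[1,2)$.} Given $\delta>0$, fix $\alpha\in(p,2)$ close enough to $p$ that
\[
\frac{1-\alpha}{2\alpha}\;\ge\;\frac{1-p}{2p}-\frac{\delta}{2};
\]
this is possible because $x\mapsto(1-x)/(2x)$ is continuous. Let $\{f_0(v)\}$ be i.i.d., equal to $K$ times a symmetric $\alpha$-stable random variable of scale $1$, where $K>0$ is a constant to be chosen; since $\alpha>p$, we have $f_0(v)\in L^p$ and $\mu=0$. The key identity, from the stability property, is that conditionally on $\mathcal F_\VarClock$ the sum $f_t(o)=\sum_v m_t(v)f_0(v)$ has the same law as $K R_t^{1/\alpha}X$, where $R_t:=\sum_v m_t(v)^\alpha$ and $X$ is an independent symmetric $\alpha$-stable variable of scale $1$. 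Therefore
\[
\EE[|f_t(o)|^p]\;=\;K^p\,\EE[|X|^p]\,\EE[R_t^{p/\alpha}].
\]

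The central step is a lower bound on $\EE[R_t^{p/\alpha}]$. Let $J$ be the set of vertices within distance $4\sqrt{t}$ of $o$, so that $|J|\le 9\sqrt{t_*}$, and set $\Omega^+:=\{m_t(J)\ge 1/2\}$; Lemma~\ref{lem:omega} gives $\PP(\Omega^+)\ge 4/5$. On $\Omega^+$, Jensen's inequality applied to the convex function $x\mapsto x^\alpha$ ($\alpha>1$) yields
\[
R_t\;\ge\;\sum_{v\in J}m_t(v)^\alpha\;\ge\;|J|^{1-\alpha}\,m_t(J)^\alpha\;\ge\;c_\alpha\,t_*^{(1-\alpha)/2}
\]
for a constant $c_\alpha>0$. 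Raising to the $p/\alpha$ power and integrating gives $\EE[R_t^{p/\alpha}]\ge c'_\alpha\, t_*^{p(1-\alpha)/(2\alpha)}$. Taking $p$-th roots and choosing $K$ so that $K\bigl(c'_\alpha\EE[|X|^p]\bigr)^{1/p}\ge 1$, we conclude
\[
\|f_t(o)\|_p\;\ge\;t_*^{(1-\alpha)/(2\alpha)}\;\ge\;t_*^{(1-p)/(2p)-\delta},
\]
where the last inequality uses $t_*>1$ and $(1-\alpha)/(2\alpha)-\bigl((1-p)/(2p)-\delta\bigr)\ge\delta/2>0$.

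\textbf{Main obstacle.} The delicate point is the tuning of $\alpha$: it must strictly exceed $p$ to ensure $L^p$-integrability of the stable initial opinions, yet it must lie close enough to $p$ that the exponent $(1-\alpha)/(2\alpha)$ does not fall below $(1-p)/(2p)-\delta$. This trade-off is precisely the source of the $\delta$-slack in the proposition and makes stable distributions, which allow $\alpha$ arbitrarily close to $p$, the natural choice. The Jensen lower bound on $R_t$ rests on the localization of the fragmentation mass in a ball of radius $O(\sqrt{t_*})$ supplied by Lemma~\ref{lem:omega}; the constants $c_\alpha, c'_\alpha$ blow up as $\alpha\to p^+$, but are harmlessly absorbed by the scaling constant $K$.
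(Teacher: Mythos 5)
Your proposal is correct and follows essentially the same route as the paper's proof. For $p\ge 2$ you invoke Proposition~\ref{prop:lowlocal} with $\eps=10\,t_*^{-1/4}$ and rescale, exactly as the paper does; for $p\in[1,2)$ you use symmetric $\alpha$-stable initial opinions with $\alpha\in(p,2)$ and the identity $f_t(o)\eqd\|m_t\|_\alpha f_0(o)$ conditionally on $\mathcal F_\VarClock$. The only cosmetic difference is that you bound $R_t=\|m_t\|_\alpha^\alpha$ from below via Jensen's inequality applied to $x\mapsto x^\alpha$ on the arc $J$, whereas the paper deduces the equivalent bound from H\"older's inequality $m_t(J)\le\|m_t\|_\alpha|J|^{(\alpha-1)/\alpha}$; these are the same inequality in different clothing. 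Your explicit $\delta/2$-margin in the choice of $\alpha$, and the observation that $t_*^a$ is monotone in $a$ for $t_*>1$, correctly fill in the step the paper compresses into ``taking $\alpha>p$ close to $p$.''
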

\begin{proof}
First, let $\{f_0(v)\}_{v\in V}$ be as in Proposition~\ref{prop:lowlocal}. Applying that proposition with $\eps :=10t_*^{-1/4}$, we obtain 
$$\mathbb E [|f_t(o)|^p] \ge  \eps ^p\mathbb P (|f_t(o)|\ge \eps ) \ge c t_*^{-p/4},$$
for some universal constant $c>0$. Replacing $f_0$ by $c^{-1/p}f_0$ and using linearity of the map $f_0 \mapsto f_t$, establishes the case $p \ge 2$ of the proposition.

Now suppose that $p \in [1,2)$,
and let $\{f_0(v)\}_{v \in V}$
be symmetric stable variables with exponent $\alpha \in (p,2)$ (See, for example, \cite[Section~VI.1]{feller1991introduction}.)
Then $f_0(v) \in L^p$ and  $\mu=0$. Moreover, conditionally on $\mathcal F_\VarClock$, the law of
$f_t(o)=\sum_{v \in V} m_t(v) f_0(v)$
is the same as that of 
$\|m_t\|_{\alpha} \cdot f_0(o)$,
where $\|m_t\|_{\alpha}^\alpha=\sum_{v \in V} m_t ^\alpha(v)$.
By H\"older's inequality, for $J \subset V$,  
$$m_t(J)= \sum_{v \in J} m_t(v) \le\|m_t\|_{\alpha} \cdot |J|^\frac{\alpha-1}{\alpha} \,.$$
Using this inequality with $J$ as in Lemma \ref{lem:omega}, we obtain that on the event $\Omega$ defined there,
$\|m_t\|_{\alpha} \ge    c_\alpha  {t_*}^{\frac{1-\alpha}{2\alpha}}\,.$
Therefore,
$$\EE[|f_t(o)|^p \mathds 1_\Omega] \ge \PP(\Omega) \cdot c_\alpha^p
{t_*}^{\frac{p(1-\alpha)}{2\alpha}} \EE[|f_0(o)|^p] \,.$$ 
Applying lemma \ref{lem:omega} and scaling $f_0$, we get that
$$\EE[|f_t(o)|^p]^{1/p} \ge 
t_*^{\tfrac{1-\alpha}{2\alpha}} \,.$$
Taking $\alpha>p$ close to $p$ completes the proof.
\end{proof}

\section{Existence and uniqueness of the averaging process}\label{section:existence} 
When the graph $G$ is infinite, it is not clear that there is a unique process satisfying \eqref{equation definition}, since an infinite cascade of dependence might arise.
This can be related to a basic feature of the continuous-time random walk $(X_t)_{t\ge 0}$ on $G$, defined as follows. At time $t=0$, the particle is located at $X_0=o$. The edges are equipped with i.i.d.\ Poisson clocks of rate $1$. When the clock on an edge incident to the particle rings, the particle crosses this edge with probability $1/2$.

A graph $G$ is called \emph{stochastically complete} if this random walk is well defined for all $t\ge 0$. This property holds in finite graphs and in graphs of bounded degree, but there are graphs with sufficiently fast growth that are not stochastically complete, since $X_t$ escapes to infinity in finite time. A detailed study of stochastic completeness is given in \cite{MR3152724}.

When $G$ is stochastically complete, we can define the fragmentation process originating from $o$ by 
\begin{equation}
    m_t(v):=\mathbb P_o(X_t=v\mid \mathcal F _\VarClock ),
\end{equation}
where $\mathcal F _\VarClock$ is the $\sigma$-algebra generated by the Poisson clocks. Then we can define the edge-averaging process by Equation~\eqref{equation definition}.

\section{Open problems and future directions }\label{section:future} 
\begin{enumerate}
\item Consider the edge-averaging process on an infinite graph $G$. Does almost sure convergence of the opinions $f_t(v)$ hold under the natural assumption that the i.i.d.\ initial opinions $\{ f_0(v)\}_{v\in V}$ are in $L^1$?

This question, due to Gantert and Vilkas \cite{gantert2024averaging}, was already mentioned in the introduction. 

In Theorem~\ref{theorem infinite}, we proved almost sure convergence under the stronger assumption that $f_0(v)$ is in $L^p$ for some $p>4$.

\item A natural question is whether the results of this paper continue to hold if the independence assumption on the initial opinions is relaxed. Even in the one-dimensional lattice $\mathbb Z$, the opinions at a node may fail to converge for certain bounded initial opinion profiles, e.g.,
if
$f_0(j)=(-1)^k$ when $|j| \in [n_k,n_{k+1})$, where $\{n_k\}$ is a rapidly increasing sequence.
Such an example is analyzed in~\cite[Claim 6.7] {elboim2024asynchronous} for a slightly different model, the asynchronous DeGroot dynamics, but the same argument applies to the edge-averaging dynamics. 

Thus, some assumption on the initial opinions, such as correlation decay or ergodicity,  is needed. 
For example, the result in the case $p=2$ of Theorem~\ref{theorem:Lp}  holds if the initial opinions are uniformly bounded in $L^2$ with  the same mean, and satisfy a uniform bound on correlation sums:
\begin{equation} \label{sumcor}
 \sup_{v\in V} \sum_{w \in V}  
 \text{Cov}\big(f_t(v),f_t(w)\big) \le C <\infty \,.
\end{equation}
It would be interesting to determine which correlation bounds suffice for our other upper bounds to persist. 

In a different direction, perhaps almost sure convergence, as in Theorem~\ref{theorem infinite}, holds if the underlying graph is the lattice $\mathbb Z^d$ and the initial opinions are bounded, stationary, and ergodic; proving this would require significant new ideas.

\item Let $G$ be a connected graph  with $n$ nodes and maximal degree $\Delta$. Then it is not hard to obtain a $C_\eps(\Delta) \log n$ lower bound for the expected $\eps$-consensus time (this is the time that it takes until all the vertices update their opinions at least once). On which families of graphs is this upper bound tight (up to a constant factor)?
We expect a positive answer on expanders and high-dimensional grids.

\item  In Theorem \ref{theorem finite}, we estimate the consensus time on a connected  graph with $n$ nodes, for i.i.d.\ initial opinions that are bounded by a fixed constant.  How does the worst-case consensus time scale with $n$, if the boundedness assumption is relaxed to a bound on the 
$L^p$ norm ?
Theorem \ref{theorem:Lp} implies a power-law upper bound on the consensus time, but we do not have matching lower bounds. 

\end{enumerate}

\section{Acknowledgements}
We thank Ben Golub and Timo Vilkas for their insightful comments. We are grateful to Nina Gantert for fruitful discussions. We thank Yuval Peretz for writing a simulation of the averaging process and producing Figure~\ref{fig:simulation} and Figure~\ref{fig:charts}. RP acknowledges the support of the Israel Science Foundation Grant \#2566/20.

\bibliographystyle{plain}
\bibliography{pnas-sample}

\end{document}